\newtheorem{thm}{Theorem}[section]
\newtheorem{lem}[thm]{Lemma}
\theoremstyle{definition}
\theoremstyle{remark}
\newtheorem{rem}[thm]{Remark}
\numberwithin{equation}{section}
\newcommand{\beas}{\begin{eqnarray*}}
\newcommand{\eeas}{\end{eqnarray*}}
\newcommand{\bes} {\begin{equation*}}
\newcommand{\ees} {\end{equation*}}
\newcommand{\be} {\begin{equation}}
\newcommand{\ee} {\end{equation}}
\newcommand{\bea} {\begin{eqnarray}}
\newcommand{\eea} {\end{eqnarray}}
\newcommand{\ra} {\rightarrow}
\newcommand{\txt} {\textmd}
\newcommand{\ds} {\displaystyle}
\newcommand{\R}{\mathbb R}
\newcommand{\C}{\mathbb C}
\newcommand{\T}{\mathbb T}
\newcommand{\N}{\mathbb N}
\newcommand{\la}{\lambda}
\begin{document}

\title[Uncertainty Principles of Ingham and Paley-Wiener on Semisimple Lie Groups] {Uncertainty Principles of Ingham and Paley-Wiener on Semisimple Lie Groups}

\author{Mithun Bhowmik and Suparna Sen}

\address{Stat-Math Unit, Indian Statistical Institute, 203 B. T. Road, Kolkata - 700108, India.}

\email{mithunbhowmik123@gmail.com, suparna29@gmail.com}

\thanks{This work was supported by Indian Statistical Institute, India (Research fellowship to Mithun Bhowmik) and Department of Science and Technology, India (INSPIRE Faculty Award to Suparna Sen).}


\begin{abstract}
Classical results due to Ingham and Paley-Wiener characterize the existence of nonzero functions supported on certain subsets of the real line in terms of the pointwise decay of the Fourier transforms. Viewing these results as uncertainty principles for Fourier transforms, we prove certain analogues of these results on connected, noncompact, semisimple Lie groups with finite center. We also use these results to show unique continuation property of solutions to the initial value problem for time-dependent Schr\"odinger equations on Riemmanian symmetric spaces of noncompact type.
\end{abstract}

\subjclass[2010]{Primary 22E30; Secondary 22E46, 43A80}

\keywords{uncertainty principle, semisimple Lie group, symmetric space, Schr\"odinger equation}

\maketitle

\section{Introduction}
It is a well known fact in harmonic analysis that if the Fourier transform of an integrable function on $\R$ is very rapidly decreasing then the function can not be
compactly supported unless it vanishes identically. A manifestation of this fact is as follows: Let $f\in L^1(\R)$ and $a>0$ be such that 
\bes
|\widehat f(\xi)| \leq Ce^{-a|\xi|}, \:\:\:\: \txt{ for all }\xi \in \R.
\ees 
If $f$ is compactly supported then $f$ is identically zero. This holds due to the fact that the very rapid decay of the Fourier transform imposes real analyticity on the function. In fact, $f$ extends to a holomorphic function on an open subset of $\C$. 
This initial observation motivates one to endeavour for a more optimal decay of the Fourier transform $\widehat{f}$ for such a conclusion. For instance we may ask:  if $\widehat{f}$ decays faster than $1/(1+|\cdot|)^n$ for all $n \in \N$ but slower than the function  $e^{-a|.|}$, can $f$ be compactly supported without being identically zero? A more precise question  could be:  is there a nonzero integrable compactly supported function $f$ on $\R$ with its Fourier transform satisfying 
\be \label{introdecay} 
|\widehat f(\xi)| \leq Ce^{-\frac{|\xi|}{\log|\xi|}}, \:\:\:\: \txt{ for large } |\xi|?
\ee
The answer to the above question is in the negative and follows from classical results due to Paley-Wiener (\cite{PW}, Theorem II; \cite{PW1}, P. 16, Theorem XII) and Ingham \cite{I} (see also Theorem \ref{euclidean}). These results may be viewed as instances of uncertainty principles in harmonic analysis.  We refer the reader to \cite{FS, HJ, T} for the literature on uncertainty principles.

Despite being results of same genre the treatment of Paley-Wiener and Ingham mentioned above have intriguing differences: while Paley-Wiener's method is complex analytic, Ingham relies on the notion of quasianalytic functions and the celebrated Denjoy-Carleman theorem (\cite{R}, Theorem 19.11). We would like to point out that the result of Paley and Wiener leads to a proof of the Denjoy-Carleman theorem. In some sense the result of Ingham is stronger than that of Paley and Wiener. In fact, a close examination of the proof of Ingham's result (as given in \cite{I}) reveals that if $\widehat f$ satisfies (\ref{introdecay}) and $f$ vanishes on an open set then $f$ actually vanishes identically. Different versions of the result of Ingham was later proved by Levinson and Beurling \cite{L1, L2, Koo}.
All these results mentioned above are obtained only for the circle group and real line. Only very recently we have obtained the following analogues of the results of Paley-Wiener and Ingham in the context of $n$-dimensional Euclidean spaces (see Theorem 2.3 and Theorem 2.2 of \cite{BRS} respectively). For $f\in L^1(\R^n)$, we shall define its Fourier transform $\widehat f$ by
\bes
\widehat f(\xi)=\int_{\R^n}f(x)~ e^{-i x\cdot \xi}~ dx,\:\:\:\: \txt{ for } \xi\in\R^n.
\ees
\begin{thm} \label{euclidean}
Let $\psi: [0,\infty) \rightarrow[0,\infty)$ be a locally integrable function and $f \in L^1(\R^n)$ be such that 
\be \label{estintro}
|\widehat{f}(\xi)|\leq Ce^{-\psi(\|\xi\|)},  \:\:\:\: \txt{ for all } \xi \in\R^n.
\ee
Let
\be \label{integral}
I = \int_{0}^{\infty}{\frac{\psi(r)}{1+r^2}dr}.
\ee
\begin{enumerate}
\item[(a)] If $f\in C_c(\R^n)$ and $I=\infty$ then $f$ is identically zero on $\R^n$. Conversely, if $I< \infty$ and $\psi$ is non-decreasing then there exists a nonzero function $f \in C_c^{\infty}(\R^n)$ satisfying (\ref{estintro}).
\item[(b)] Let $\psi(r) = r \theta(r)$, for $r \in [0,\infty)$, where $\theta: [0,\infty)\rightarrow [0,\infty)$ is a decreasing function with $\lim_{r \to\infty}\theta(r)=0$. If $f$ vanishes on an open set in $\R^n$ and $I = \infty$ then $f$ is zero. Conversely, if $I<\infty$ then there exists a nonzero function $f \in C_c^{\infty}(\R^n)$ satisfying (\ref{estintro}).
\end{enumerate}
\end{thm}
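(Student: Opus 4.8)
The plan is to reduce both forward implications to the classical one-dimensional theorems (available for $\R$) and to derive the converses from a single one-dimensional construction, transferring between dimension $1$ and dimension $n$ by elementary devices; the two forward directions, however, are handled by genuinely different machinery, mirroring the complex-analytic nature of Paley–Wiener and the quasianalytic nature of Ingham. For the forward part of (a) I would use the projection–slice theorem. For $\omega\in S^{n-1}$ let $P_\omega f(s)=\int_{\omega^\perp}f(s\omega+y)\,dy$ be the Radon projection; since $f\in C_c(\R^n)$, $P_\omega f\in C_c(\R)$ and its one-dimensional Fourier transform satisfies $\widehat{P_\omega f}(t)=\widehat f(t\omega)$, whence $|\widehat{P_\omega f}(t)|\le Ce^{-\psi(|t|)}$. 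The classical Paley–Wiener theorem (\cite{PW,PW1,Koo}) asserts that a nonzero compactly supported function $g$ on $\R$ has convergent logarithmic integral, i.e. $\int_\R\frac{-\log|\widehat g(t)|}{1+t^2}\,dt<\infty$. As $-\log|\widehat{P_\omega f}(t)|\ge\psi(|t|)-\log C$ and $\int_\R\frac{\psi(|t|)}{1+t^2}\,dt=2I=\infty$, this forces $P_\omega f\equiv0$ for every $\omega$; injectivity of the Radon transform on $C_c(\R^n)$ then yields $f\equiv0$. Note this argument uses nothing about $\psi$ beyond $I=\infty$.

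For the forward part of (b) the same reduction is unavailable: $f$ need not be compactly supported, so the hyperplane integrals $P_\omega f$ need neither vanish on an interval nor converge pointwise. I would therefore argue through derivatives and quasianalyticity. First regularize: with $\phi_\epsilon\in C_c^\infty$ supported in a ball of radius $\epsilon$ and $\int\phi_\epsilon=1$, the function $f_\epsilon=f*\phi_\epsilon$ is smooth, vanishes on the (still nonempty, open) set of points whose $\epsilon$-ball lies in the given open set, and satisfies $|\widehat{f_\epsilon}(\xi)|\le|\widehat f(\xi)|\le Ce^{-\psi(\|\xi\|)}$. Since $\widehat{\phi_\epsilon}$ is rapidly decreasing, Fourier inversion gives finite bounds $\sup_x|\partial^\alpha f_\epsilon(x)|\le M_{|\alpha|}$ with $M_k\asymp\int_0^\infty r^{k+n-1}e^{-\psi(r)}w_\epsilon(r)\,dr$, where $w_\epsilon(r)=\sup_{\|\xi\|=r}|\widehat{\phi_\epsilon}(\xi)|$ is rapidly decreasing; thus $f_\epsilon$ lies in the Denjoy–Carleman class determined by $(M_k)$. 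Restricting $f_\epsilon$ to any line meeting the vanishing set produces a one-variable function in the corresponding one-dimensional class that vanishes on an interval, and the Denjoy–Carleman theorem (\cite{R}, Theorem 19.11) forces it to vanish on the whole line; varying the line gives $f_\epsilon\equiv0$, and letting $\epsilon\to0$ in $L^1$ gives $f\equiv0$. The crux, and the step I expect to be the main obstacle, is showing that the class $(M_k)$ is quasianalytic precisely when $I=\infty$: this is the Legendre-type duality between the weight $\psi(r)=r\theta(r)$ and the trace function $T(r)=\sup_k r^k/M_k$, under which $\log T(r)\asymp\psi(r)$ and Carleman's divergence condition becomes $\int_1^\infty\frac{\psi(r)}{r^2}\,dr=\infty$; here $\theta\downarrow0$ keeps $\psi$ essentially increasing so that the correspondence is clean, and the rapidly decreasing weight $w_\epsilon$ only strengthens quasianalyticity and so does no harm.

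Both converses rest on a single one-dimensional construction. Given a nondecreasing $\Phi\ge0$ with $\int_0^\infty\frac{\Phi(r)}{1+r^2}\,dr<\infty$, choose $a_k>0$ with $\sum_k a_k<\infty$ and set $\widehat h(t)=\prod_{k\ge1}\frac{\sin(a_k t)}{a_k t}$; then $h$ is the infinite convolution of the boxes $\frac{1}{2a_k}\mathbf{1}_{[-a_k,a_k]}$, hence a nonzero element of $C_c^\infty(\R)$, and a judicious choice of $(a_k)$ in terms of $\Phi$ yields $|\widehat h(t)|\le Ce^{-\Phi(|t|)}$ (the classical link between $\sum a_k<\infty$ and $\int\Phi(r)/r^2<\infty$). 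For (a), apply this with $\Phi(r)=\psi(\sqrt n\,r)$, whose integral is again finite by dilation, and take the tensor product $f(x)=\prod_{j=1}^n h(x_j)\in C_c^\infty(\R^n)$; since $\sqrt n\max_j|\xi_j|\ge\|\xi\|$ and $\psi$ is nondecreasing, $\sum_j\psi(\sqrt n|\xi_j|)\ge\psi(\|\xi\|)$, so $|\widehat f(\xi)|=\prod_j|\widehat h(\xi_j)|\le C^n e^{-\psi(\|\xi\|)}$. For (b), $\psi=r\theta(r)$ need not be monotone, so first replace it by the nondecreasing majorant $\Phi(r)=\int_0^r\theta(s)\,ds\ge r\theta(r)=\psi(r)$, which satisfies $\int_0^\infty\frac{\Phi(r)}{1+r^2}\,dr\asymp\int_0^\infty\frac{\theta(s)}{1+s}\,ds<\infty$ by Tonelli; the function produced for $\Phi$ then also obeys the estimate for $\psi$, completing the converse.
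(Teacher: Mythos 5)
The paper never proves Theorem \ref{euclidean}; it imports it from \cite{BRS}, so the only in-paper machinery to compare against is Lemma \ref{lemma}, the complex-analytic engine behind the Paley--Wiener half. Against that backdrop, your part (a) and both converses are fine: the projection-slice reduction plus the one-dimensional logarithmic-integral theorem is a correct and genuinely more elementary route than the entire-extension/Phragm\'{e}n--Lindel\"{o}f/harmonic-majorant argument of Lemma \ref{lemma} (indeed you do not even need Radon injectivity, since $\widehat{f}(t\omega)=\widehat{P_\omega f}(t)=0$ for all $t,\omega$ already kills $\widehat{f}$), and the infinite convolution of boxes, the tensor product with the dilation $\Phi(r)=\psi(\sqrt{n}\,r)$, and the nondecreasing majorant $\Phi(r)=\int_0^r\theta(s)\,ds$ with the Tonelli computation are all standard and correct.

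The forward direction of (b) has a genuine gap at exactly the step you flag as the main obstacle: the quasianalyticity of the class $C\{M_k\}$ with $M_k=\int_0^\infty r^{k+n-1}e^{-\psi(r)}w_\epsilon(r)\,dr$. The claimed duality $\log T(r)\asymp\psi(r)$ is false as stated, for two reasons. First, $\psi(r)=r\theta(r)$ need not tend to infinity, let alone be ``essentially increasing'': take $t_1=10$, $t_{j+1}=e^{t_j}$ and $\theta\equiv 1/t_j$ on $[t_j,t_{j+1})$; then $\theta$ decreases to $0$ and $\int_1^\infty\theta(r)r^{-1}dr=\sum_j\bigl(1-\tfrac{\log t_j}{t_j}\bigr)=\infty$, yet $\psi\le 2$ on every interval $[t_j,2t_j]$. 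Hence $\int_0^\infty r^k e^{-\psi(r)}dr=\infty$ for every $k$, so there is no ``class without $w_\epsilon$'' that the mollifier could strengthen; and with $w_\epsilon$ inserted, $M_k$ receives from $\bigcup_j[t_j,2t_j]$ contributions of size roughly $t_j^{k+n}w_\epsilon(2t_j)$ that are governed by $w_\epsilon$ alone, i.e.\ by the moments of the Fourier transform of a compactly supported bump --- moments which by themselves define a non-quasianalytic class. Whether $C\{M_k\}$ is quasianalytic then hinges on a delicate interplay between the sparsity of the set where $\psi$ is small and the decay of $w_\epsilon$, and nothing in the sketch addresses it. Second, even where $e^{-\psi}$ dominates, the Legendre computation gives only $\log T(r)\le\psi(r)+O(\log r)$ --- the wrong direction; the divergence $\int_1^\infty\log T(r)\,r^{-2}dr=\infty$ demanded by Denjoy--Carleman is precisely the content of Ingham's theorem, and his proof obtains it by passing to a carefully regularized minorant of $\theta$ and bounding $\sum_k M_{k-1}/M_k$ from below by a multiple of $\int\theta(r)r^{-1}dr$ (Levinson's complex-analytic route is the alternative). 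Your scaffolding --- mollify, bound derivatives by moments, restrict to lines, invoke Denjoy--Carleman --- is the right frame and matches the classical setup, but the quasianalyticity verification, which is the theorem, is missing.
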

\noindent Part (a) of the above theorem corresponds to the result of Paley-Wiener and part (b) corresponds to that of Ingham.

Viewing $\R^n$ as a noncompact Riemannian symmetric space one can naturally ask about analogues of the above results for Riemannian symmetric spaces of noncompact type or more generally for connected, noncompact, semisimple Lie groups with finite centre. In these cases nice parametrization of relevant representations are available and consequently the natural domain of the Fourier transform turns out to be $\R$ or $\R^n$. This enables us to formulate analogous questions for these spaces. In this paper we will first prove an analogue of the result of Paley-Wiener for connected, noncompact semisimple Lie groups with finite centre and arbitrary rank (Theorem \ref{semisimple}). The main ingredient here is Lemma \ref{lemma} whose proof was inspired by a related result of Hirschmann \cite{Hi}.

However, an analogue of Ingham's result poses much greater difficulty. This is due to the unavailability of a suitable analogue of the Denjoy-Carleman theorem for Riemannian manifolds. Certain analogues of the Denjoy-Carleman theorem for Riemannian manifolds were proved by Bochner and Taylor \cite{Bo, BT} but these results don't seem to be suitable for our purpose. We therefore restrict ourselves to bi-$K$-invariant functions on complex semisimple Lie groups and obtain an analogue of Ingham's theorem (Theorem \ref{complexsemisimple}). The main idea here is to use the explicit description of elementary spherical functions and reduce matters to Euclidean spaces.

Our next set of results deal with Riemannian symmetric spaces of noncompact type. Our aim here is to relate the results of Paley-Wiener and Ingham to the problem of unique continuation of solution to the Schr\"odinger equation in the spirit of \cite{EKPV1, EKPV, C, PS}. These results present some pleasant surprise in the sense that the expected results demand more decay than that is required in Theorem \ref{semisimple} and Theorem \ref{complexsemisimple} (see the beginning of \S 4.2.1 and Remark \ref{remark}). This can be attributed to the exponential volume growth of the invariant measure on the symmetric space. Taking this into account we could obtain unique continuation properties of the solution to the Schr\"odinger equation on symmetric spaces (Theorem \ref{schrcomplexthm}, Theorem \ref{schrthm}).

This paper is organized as follows: In the next section we describe the required preliminaries on connected, noncompact, real semisimple Lie groups with finite centre and the associated symmetric spaces. In Section 3 we shall prove an analogue of Theorem \ref{euclidean} (a) for $f \in C_c(G)$ where $G$ is a connected, noncompact, real semisimple Lie group with finite centre. We also prove an analogue of Theorem \ref{euclidean} (b) on a noncompact, complex semisimple Lie group. In the last section we consider unique continuation properties of solutions to the initial value problem for time-dependent Schr\"odinger equation on Riemannian symmetric spaces of the noncompact type. First we prove such a result for bi-$K$-invariant initial value on noncompact, complex semisimple Lie groups related to Theorem \ref{euclidean} (b). We also prove such a result on Riemannian symmetric spaces of the noncompact type related to Theorem \ref{euclidean} (a).

\section{Notation and Preliminaries}

In this section, we shall discuss the preliminaries and notation related to the noncompact semisimple Lie groups and the associated symmetric spaces. These are standard and can be found, for example, in \cite{GV, H, H1, H2, K}. To make the article self-contained, we shall gather only those results which will be used throughout this paper. 

We shall use the following notation in this paper: $C_c(X)$ denotes the set of compactly supported continuous functions on $X$, $C_c^{\infty}(X)$ denotes the set of compactly supported smooth functions on $X$ and $C$ denotes a constant whose value may vary. For $x,y \in \R^n$, we shall use $\|x\|$ to denote the Euclidean norm of the vector $x$ and $x \cdot y$ to denote the Euclidean inner product of the vectors $x$ and $y$. We shall also denote the Euclidean norm in $\C^n$ by $\|\cdot\|$. $\Im (\la)$ denotes the imaginary part of $\la \in \C^n$. 

Let $G$ be a connected, noncompact, real semisimple Lie group with finite centre and $K$ be a fixed maximal compact subgroup of $G$. Let $\mathfrak g$ and $\mathfrak k$ denote the Lie algebras of $G$ and $K$ respectively. Suppose that $B$ is the Cartan Killing form on $\mathfrak g$ and $\mathfrak g = \mathfrak k \oplus \mathfrak p$ is a Cartan decomposition of $\mathfrak g$. It is known that $B$ restricted to $\mathfrak p$ is positive definite, thus it gives an inner product and hence a norm $\| \cdot \|_B$ on $\mathfrak p$. Fix a maximal abelian subspace $\mathfrak a$ of $\mathfrak p$. If the dimension of $\mathfrak a$ is $l$, then $G$ is said to be of real rank $l$. We can identify $\mathfrak a$ with $\mathbb{R}^l$ endowed with the inner product induced from $\mathfrak p$. Let $\mathcal{R}$ denote the set of nonzero roots for the adjoint action of $\mathfrak a$ on $\mathfrak g$ and $W$ denote the Weyl group corresponding to $\mathcal{R}$. Fix a Weyl chamber $\mathfrak a_+$ of $\mathfrak a$ and let $\mathcal{R}_+$ be the corresponding set of positive roots. 

Let $A=\exp {\mathfrak a}$ and $A_+= \exp{\mathfrak{a_+}}$. If $\overline{A_+}$ denotes the closure of $A_+$ in $G$, then one gets the polar decomposition $G=K\overline{A_+}K$, that is, each $g\in G$ can be uniquely written as 
\bes
g=k_1 a k_2, \:\:\:\: \txt{ for } k_1, k_2 \in K \txt{ and } a \in {\overline{A_+}}.
\ees 
Using the polar decomposition $\|\cdot\|_B$ on $G$ is defined by
\bes
\|g\|_B = \|k_1 a k_2\|_B = \|\log a\|_B,
\ees 
where $\log a$ denotes the unique element in $\mathfrak a$ such that $\exp(\log a)=a$. Let $\mathfrak g_\alpha$ denote the root space corresponding to $\alpha\in \mathcal{R}$ with $m_{\alpha}=dim ~\mathfrak g_{\alpha}$. The Haar measure $dg$ on $G$ relative to the polar decomposition is given by $dg = J(a)~dk_1~da~dk_2$, that is, for any suitable function $f$ on $G$, we have
\bes
\int_{G}{f(g)dg}=\int_{K}{\int_{\overline{A_+}}{\int_{K}{f(k_1ak_2) ~ J(a)~dk_1~da~dk_2}}},
\ees  
where $dk$ is the normalised Haar measure on $K$, $da$ is the Lebesgue measure on $A \cong \R^l$ and
\bes 
J(a)= \prod_{\alpha\in \mathcal{R}_+}\left(e^{\alpha(\log a)}-e^{-\alpha(\log a)}\right)^{m_{\alpha}}, \:\:\:\: \txt{ for } a \in \overline{A_+}.
\ees
We define the half-sum of the elements of $\mathcal{R}_+$ counted with their multiplicities by $\rho$ given by 
\bes
\rho=\frac{1}{2}\sum_{\alpha\in \mathcal{R}_+}m_{\alpha}\alpha.
\ees
Thus we get the trivial estimate
\be \label{jest} 
J(a) \leq C e^{2\rho(\log a)}, \:\:\:\: \txt{ for } a \in \overline{A_+}.
\ee 

For an element $\lambda$ in the dual $\mathfrak a^*$ of $\mathfrak a$, let $H_\lambda$ be the unique element in $\mathfrak a$ such that 
\bes 
\lambda(H)= B(H, H_\lambda), \:\:\:\: \txt{ for all } H \in \mathfrak a.
\ees 
The map $\la \mapsto H_{\la}$ identifies $\mathfrak a^*$ with $\mathfrak a$, and we use it to define the dual inner product on $\mathfrak a^*$, also denoted $B(\cdot,\cdot)$, by the formula 
\bes 
B(\lambda, \mu) = B(H_\lambda, H_\mu), \:\:\:\: \txt{ for }\lambda, \mu \in \mathfrak a^*.
\ees 
The elements of the Weyl group $W$ are orthogonal transformations of $\mathfrak a^*$ which correspond to orthogonal transformations of $\mathfrak a$ by the formula 
\be \label{shlambda}
s H_{\la} = H_{s \la}, \:\:\:\: \txt{ for } s \in W \txt{ and } \la \in \mathfrak a^*.
\ee 
The bilinear extension of $B(\cdot,\cdot)$ to $\mathfrak a^*_{\mathbb{C}}$, the dual of the complexification $\mathfrak a_{\C}$ of $\mathfrak a$, is also denoted by $B(\cdot,\cdot)$. 

We now describe the principal series representations of $G$. The Iwasawa decomposition $G=KAN$ gives rise to the projection mappings $\kappa:G\rightarrow K$, $H : G \ra \mathfrak{a}$ and $\eta:G\rightarrow N$ such that 
\bes
g=\kappa(g)\exp H(g)\eta(g).
\ees 
Let $M$ be the centraliser of $A$ in $K$. Given $\xi$ in the unitary dual $\widehat M$ of $M$ let $\mathbb{H}$ be the finite dimensional Hilbert space on which $\xi$ is realised. We define the Hilbert space $\mathbb{H}_{\xi}$ by  
\bes 
\left\{ \phi : K \ra \mathbb{H} \txt{ measurable, } \phi(km) = \xi(m^{-1}) \phi(k) \txt{ for } k \in K, ~ m \in M \txt{ and } \int_K \|\phi(k)\|^2 dk < \infty \right\},
\ees 
where $\|\cdot\|$ denotes the norm on $\mathbb H$ induced from the inner product $\langle \cdot, \cdot \rangle$ on $\mathbb{H}$. The Hilbert space $\mathbb{H}_{\xi}$ is equipped with the inner product
\bes
\langle \phi , \psi \rangle_{\xi} = \int_K \langle \phi(k), \psi(k) \rangle ~ dk.
\ees
For $\xi \in \widehat M $ and $\lambda \in \mathfrak a^*_{\mathbb{C}}$ we have a representation $\pi_{\xi, \lambda}$ acting on the Hilbert space $\mathbb{H}_{\xi}$ given by  
\bes
\left(\pi_{\xi, \lambda}(g)\phi\right)(k)=e^{(i\lambda-\rho)H(g^{-1}k)}\phi\left(\kappa(g^{-1}k)\right), \:\:\:\: \txt{ for } g \in G, ~ k \in K, ~ \phi \in \mathbb H_{\xi}.
\ees 
It is known that $\pi_{\xi, \lambda}$ is unitary if $\lambda \in \mathfrak a^*$ and $\pi_{\xi, \lambda}$, $\pi_{\xi, \mu}$ are unitarily equivalent if and only if $\la = s \mu$ where $s \in W$. Moreover, given $\xi\in \widehat M$ there exists a dense open subset $O_\xi\subset \mathfrak a^*$ such that for $\lambda \in O_\xi$, $\pi_{\xi, \lambda}$ is irreducible. We now define the group Fourier transform of $f \in L^1(G)$ given by the operator valued integral 
\bes 
\pi_{\xi, \lambda}(f) = \int_{G} f(g) \pi_{\xi, \lambda}(g) dg, \:\:\:\: \txt{ for } (\xi, \lambda) \in \widehat{M} \times \mathfrak a^*.
\ees 
For $f \in L^1 \cap L^2(G),$ $\pi_{\xi, \lambda}(f)$ is a Hilbert-Schmidt operator and we shall denote its Hilbert-Schmidt norm by $\|\pi_{\xi, \lambda}(f)\|_{HS}$.

If $\xi$ is the trivial representation in $\widehat M$, then we denote $\pi_{\xi,\lambda}$ by $\pi_\lambda$ and the set of representations $\{\pi_\lambda\}_{\lambda\in \mathfrak{a}^*}$, realized on the Hilbert space $L^2(K/M)$, are called the class one principal series representations of $G$. We observe that $\pi_{\la}|_K$ are  given by left translations on $L^2(K/M)$, in particular, the $K$-fixed vectors are given by constant functions. The right-$K$-invariant functions on $G$ can be viewed as functions on the symmetric space $X=G/K$ and vice-versa. For the harmonic analysis of a function $f$ on $X$, only the class one principal series representations $\pi_\lambda$ are relevant and $\pi_{\lambda}(f)$ is completely determined by $\pi_{\lambda}(f)e_0$ where $e_0$ denotes the constant function $1$ on $K/M$.  In this case, the group theoretic Fourier transform can be reinterpreted as the Helgason-Fourier transform on the symmetric space $X$, as introduced by Helgason. For a sufficiently nice function $f$ on $X$, its Helgason-Fourier transform $\widetilde{f}$ is a function defined on $\mathfrak{a}_{\C}^* \times K/M$ given by 
\be \label{hftdefn}
(\pi_{\lambda}(f)e_0)(kM) = \widetilde{f}(\lambda,kM) = \int_{G} f(g) e^{(i\lambda - \rho)H(g^{-1}k)} dg, 
\ee
for  $\lambda \in \mathfrak{a}_{\C}^*, ~ kM \in K/M$ whenever this integral exists (see \cite{H1}, Ch. III, \S 1). Thus it follows that 
\be \label{hftftrel}
\int_{K/M} |\widetilde{f}(\lambda,kM)|^2 dk = \|\pi_{\la}(f) e_0\|_{L^2(K/M)}^2 = \| \pi_{\la}(f) \|_{HS}^2, \:\:\:\: \txt{ for } \la \in \mathfrak{a}^*.
\ee
The Plancherel theorem states that the Helgason Fourier transform extends to an isometry of $L^2(X)$ onto $L^2(\mathfrak{a}_+^* \times K/M, |c(\lambda)|^{-2} dk ~ d\lambda)$ where $c(\lambda)$ is Harish-Chandra's $c$-function, that is,
\bes
\int_X |f(x)|^2 dx = \int_{\mathfrak{a}_+^*} \int_{K/M} |\widetilde{f}(\lambda,kM)|^2 |c(\lambda)|^{-2} dk ~ d\lambda.
\ees

Using the polar decomposition of $G$ we may view a bi-$K$-invariant function $f$ on $G$ as that depending only on its values on $A_+$, or by using the inverse exponential map we may also view $f$ as a function on $\mathfrak{a}$ solely determined by its values on $\mathfrak{a}_+$. Henceforth we shall denote the set of bi-$K$-invariant functions in $L^1(G)$ by $L^1(K \backslash G/K)$. If $f \in L^1(K \backslash G/K)$ then $\pi_{\lambda}(f)$ is determined by $\langle \pi_{\lambda}(f)e_0, e_0 \rangle$ and the spherical transform $\widetilde{f}(\lambda)$ of $f$ is defined by 
\bes
\widetilde{f}(\lambda) = \int_G f(g) \phi_{\lambda}(g)dg = \langle \pi_{\lambda}(f)e_0, e_0 \rangle, \:\:\:\: \txt{ for } \la \in \mathfrak{a}^*,
\ees
where $\phi_\lambda$ is the elementary spherical function corresponding to $\lambda\in \mathfrak a^*$ given by 
\be \label{philambda} 
\phi_\lambda(g)=\langle \pi_\lambda(g) e_0, e_0 \rangle = \int_K e^{(i\la - \rho)H(g^{-1}k)} dk, \:\:\:\: \txt{ for } g \in G.
\ee
It is clear that the elementary spherical function $\phi_{\lambda}$ is a bi-$K$-invariant function on $G$ for any $\lambda\in \mathfrak a^*$. The following estimates regarding $\phi_\la$ are well known (see \cite{GV}, \S 4.6).
\be
e^{-\rho(\log a)} \leq \phi_0(a) ~~\leq ~ C (1+\|\log a\|_B)^m e^{-\rho(\log a)}, \:\:\:\: \txt{ for } a \in \overline{A_+}, \txt{ some } m,C>0, \label{phi0}
\ee
\be
0 < \phi_{i\la}(a) \leq ~ e^{\la^+(\log a)} \phi_0(a), \:\:\:\: \txt{ for } a \in \overline{A_+},~ \lambda\in \mathfrak a^*,  \label{phiila} 
\ee
where $\la^+$ is the element in the fundamental Weyl chamber corresponding to $\la$. In this case
\be \label{sphtftrel}
\|\pi_{\lambda}(f)\|_{HS} = |\langle \pi_{\lambda}(f)e_0, e_0\rangle| = |\widetilde{f}(\lambda)|, \:\:\:\: \txt{ for } \la \in \mathfrak{a}^*.
\ee

Apart from Fourier transform, we will also need the notion of Radon transform on Riemannian symmetric space $X$. In the following, we shall describe the properties of Radon transform which will be used in this paper. By identifying the space $G/MN$ of horocycles on $X$ with $ (K/M) \times A$, we define the Radon transform of a sufficiently regular function $f:X\rightarrow \mathbb{C}$  by 
\be \label{radon}
Rf(kM,a)=e^{\rho(\log a)}\int_N{f(kan\cdot o)~ dn}
\ee
for all $kM \in K/M$, $a\in A$ whenever this integral exists (see \cite{H1}, P. 220, see also \cite{SSG}) and
$o = eK$ denotes the identity coset in $X$. It is known that the Helgason-Fourier transform of a sufficiently regular function $f$ is the Euclidean Fourier transform of the Radon transform (see \cite{H1}, P. 219) given by
\be \label{hftradon}
\widetilde{f}(\lambda,kM) = \mathcal{F}_A(Rf(kM,\cdot))(\lambda), \:\:\:\: \txt{ for almost all } kM \in K/M \txt{ and all } \lambda \in \mathfrak{a}^*,
\ee 
where $\mathcal{F}_A$ denotes the Euclidean Fourier transform of $h \in L^1(A)$ defined by
\bes
(\mathcal{F}_A h)(\lambda) = \int_A h(a) e^{-i\lambda(\log a)} da, \:\:\:\: \txt{ for } \lambda \in \mathfrak{a}^* \cong \R^l.
\ees
It is well known that Radon transform is injective on $L^1(X)$ (see \cite{H1}, Ch. II, Theorem 3.2). 

For $f \in L^1(K \backslash G/K)$, we define the Abel transform by the following integral 
\bes
\mathcal{A}f(a)=e^{\rho(\log a)}\int_N{f(an \cdot o)~ dn}, \:\:\:\: \txt{ for } a \in A,
\ees
(see \cite{H1}, P. 381). Note that the Radon transform of a bi-$K$-invariant function reduces to the Abel transform. It follows from (\ref{hftradon}) that the spherical transform of $f \in L^1(K \backslash G/K)$ is the Euclidean Fourier transform of the Abel transform $\mathcal{A}f$ given by
\be \label{abelsphtrel}
\widetilde{f}(\lambda) = \mathcal{F}_A({\mathcal{A}f})(\lambda), \:\:\:\: \txt{ for } \lambda \in \mathfrak{a}^* \cong \R^l.
\ee
This property of Abel transform is crucial for reducing some questions on bi-$K$-invariant functions defined on a semisimple Lie group $G$ to related questions on $\R^l$. Moreover, it is known that the Abel transform $\mathcal{A}$ induces a bijection between $C_c^{\infty}(K \backslash G/K)$ and the set of $W$-invariant functions in $C_c^{\infty}(A)$ (see \cite{H1}, Ch. IV, Theorem 4.1).

\section{Uncertainty Principles}

In this section we shall present analogues of the results of Paley-Wiener and Ingham, namely Theorem \ref{euclidean} (a) and Theorem \ref{euclidean} (b) on connected, noncompact, real semisimple Lie groups with finite centre and noncompact, complex semisimple Lie groups respectively. 

\subsection{Real Semisimple Lie Groups}

To prove the first theorem we shall prove a lemma on entire functions on $\C^n$. The proof of this lemma for the one-variable case is given in \cite{BS}. Since the paper is yet to be accepted, we reproduce the proof here for the sake of completeness. To prove this lemma, we shall need two results. The first one is regarding upper semicontinuous functions.
\begin{thm} [\cite{Co}, P. 218, Theorem 3.6] \label{uppsemicont}
 Let $(X,d)$ be a metric space, $v: X\rightarrow [-\infty, \infty)$ be upper semi-continuous and $v\leq M< \infty$ on $X$. Then there exists a decreasing sequence of uniformly continuous functions $\{f_n\}$ on $X$ such that $f_n\leq M$ and for every $x\in X$, $f_n(x)$ decreases to $v(x)$.
\end{thm}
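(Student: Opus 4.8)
The plan is to regularize $v$ from above by a Lipschitz (inf/sup-convolution) envelope. For each $n \in \N$ I would set
\bes
f_n(x) = \sup_{y \in X}\left(v(y) - n\, d(x,y)\right), \qquad x \in X.
\ees
The elementary properties fall out immediately: since $v \leq M$, every term in the supremum is $\leq M$, so $f_n \leq M$; taking $y = x$ gives $f_n(x) \geq v(x)$; and increasing $n$ only enlarges the penalty $n\,d(x,y)$, so $\{f_n\}$ is decreasing. (If $v \equiv -\infty$ the statement is trivial, so one may assume $v$ is finite at some point, which keeps every $f_n$ real-valued.)

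Next I would verify that each $f_n$ is in fact Lipschitz, hence uniformly continuous. For $x, x' \in X$ and any $y$, the triangle inequality gives $-d(x,y) \leq -d(x',y) + d(x,x')$, so that $v(y) - n\,d(x,y) \leq \left(v(y) - n\,d(x',y)\right) + n\,d(x,x')$; taking the supremum over $y$ yields $f_n(x) \leq f_n(x') + n\,d(x,x')$, and by symmetry $|f_n(x) - f_n(x')| \leq n\,d(x,x')$. Thus $f_n$ is $n$-Lipschitz, in particular uniformly continuous on $X$.

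The crux is the pointwise convergence $f_n(x) \downarrow v(x)$. Since $f_n(x) \geq v(x)$ already, it suffices to show $\inf_n f_n(x) \leq v(x)$. Fix $x$ and choose any $\alpha > v(x)$. Upper semicontinuity of $v$ at $x$ furnishes $\delta > 0$ with $v(y) < \alpha$ whenever $d(x,y) < \delta$. I then split the defining supremum according to whether $d(x,y) < \delta$ or $d(x,y) \geq \delta$: on the near region $v(y) - n\,d(x,y) \leq v(y) < \alpha$, while on the far region $v(y) - n\,d(x,y) \leq M - n\delta$, which is $< \alpha$ as soon as $n > (M-\alpha)/\delta$. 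Hence $f_n(x) \leq \alpha$ for all large $n$, so $\inf_n f_n(x) \leq \alpha$; letting $\alpha \downarrow v(x)$ (respectively $\alpha \to -\infty$ when $v(x) = -\infty$) gives $\inf_n f_n(x) = v(x)$.

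The only genuinely delicate point is this last step, where upper semicontinuity must be combined with the global ceiling $v \leq M$. Semicontinuity alone controls $v$ only in a neighborhood of $x$; it is precisely the finite bound $M$, together with the growing penalty $n\delta$, that suppresses the contribution of points far from $x$ and forces the supremum down to $v(x)$. Everything else — monotonicity, the bound $f_n \leq M$, and the Lipschitz estimate — is routine, so I expect the entire difficulty of the argument to be concentrated in the splitting estimate above.
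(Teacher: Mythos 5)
Your proof is correct and complete: the sup-convolution $f_n(x)=\sup_{y}\bigl(v(y)-n\,d(x,y)\bigr)$, the $n$-Lipschitz estimate via the triangle inequality, and the near/far splitting that combines upper semicontinuity with the global bound $v\leq M$ are exactly the standard argument. The paper itself quotes this result from Conway without proof, and your construction is the same one used there, so there is nothing to add.
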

\noindent The second result we need is an analogue of the maximum modulus principle on unbounded domain for subharmonic functions. We briefly recall the definition of subharmonic functions. Let $D$ be an open subset of $\C$. A function $u : D \ra [-\infty, \infty)$ is called subharmonic if $u$ is upper semi-continuous and satisfies the local submean inequality, that is,  for any $w \in D$ there exists $\rho > 0$ such that for all $r\in (0, \rho)$ the following holds
\begin{equation*} 
u(w) \leq \frac{1}{2\pi} \int_0^{2\pi} u(w + r e^{it}) dt .
\end{equation*}
It is well known that if $f$ is a holomorphic function then $g(z)=\log (|f(z)|)$ is a subharmonic function (see \cite{R}, P. 336). 
\begin{thm} [\cite{B}, P. 224, Theorem 7.15] \label{maxmod}
 Let $\Omega$ be a region (not necessarily bounded) and $u: \Omega \rightarrow \R$ be a subharmonic function which is bounded above. Let $A$ be a proper, countable subset of the boundary $\partial \Omega$ of $\Omega$ and $M$ a finite constant such that $\ds{\varlimsup_{z \to \xi} u(z)\leq M}$ for all $\xi \in \partial \Omega\smallsetminus  A$. Then $u \leq M$ throughout $\Omega$.
\end{thm}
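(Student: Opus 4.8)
The plan is to deduce this from the classical maximum principle for subharmonic functions on \emph{bounded} regions---if $v$ is subharmonic on a bounded region $D$ and $\limsup_{z\to\zeta}v(z)\le M$ for every $\zeta\in\partial D$, then $v\le M$ on $D$---by removing, one at a time, the two features that block a direct application: the exceptional set $A$ on which the boundary inequality is not assumed, and the unboundedness of $\Omega$. Throughout I write $M_0=\sup_\Omega u<\infty$ for the finite global upper bound, which will be the only thing used to control behaviour ``at infinity''.

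First I would neutralize the countable exceptional set by a subharmonic barrier, in the bounded case. Suppose $D$ is a bounded region of diameter $d$ with a countable set $A=\{a_1,a_2,\dots\}\subset\partial D$, and set $h(z)=\sum_{n\ge1}2^{-n}\log(|z-a_n|/d)$. Each summand is $\le 0$ on $D$ and harmonic off $a_n$, the partial sums decrease to $h$, and for fixed $z\in D$ one has $\mathrm{dist}(z,\partial D)\le|z-a_n|\le d$, so the series converges and $\log(\mathrm{dist}(z,\partial D)/d)\le h(z)\le 0$; hence $h$ is subharmonic, bounded above by $0$, finite at every point of $D$, and $h(z)\to-\infty$ as $z\to a_n$. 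For $\epsilon>0$ put $u_\epsilon=u+\epsilon h$. This is subharmonic and bounded above; at each $\zeta\in\partial D\setminus A$ we get $\limsup_{z\to\zeta}u_\epsilon(z)\le M$, while at each $a_n$ the term $\epsilon h\to-\infty$ together with $u\le M_0$ forces $\limsup_{z\to a_n}u_\epsilon(z)=-\infty\le M$. Thus $u_\epsilon$ meets the full boundary hypothesis, the classical principle gives $u_\epsilon\le M$, i.e. $u(z)\le M-\epsilon h(z)$, and letting $\epsilon\downarrow0$ (with $h(z)$ finite) yields $u\le M$ on $D$. This settles the statement when $\Omega$ is bounded.

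To remove unboundedness I would pass to the Riemann sphere by an inversion. Choosing a point $p\notin\overline\Omega$, the M\"obius map $\phi(z)=1/(z-p)$ carries $\Omega$ to a \emph{bounded} region $\tilde\Omega$ and sends $\infty$ to a finite boundary point $\zeta^{*}=0$. Since subharmonicity is preserved under a holomorphic change of variable, $\tilde u=u\circ\phi^{-1}$ is subharmonic on $\tilde\Omega$ and still bounded above by $M_0$; moreover, because $\phi$ is a homeomorphism of the sphere, $\limsup_{w\to\phi(\zeta)}\tilde u(w)=\limsup_{z\to\zeta}u(z)\le M$ at the images of all finite points of $\partial\Omega\setminus A$. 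The transformed problem is therefore exactly of the bounded type treated above, once we enlarge the exceptional set to $\tilde A=\phi(A)\cup\{\zeta^{*}\}$, which is still countable. The decisive point is that after inversion the troublesome point at infinity has become an \emph{ordinary finite} boundary point $\zeta^{*}$, where a barrier $\epsilon\log|w-\zeta^{*}|$ can drive the function below $M$; here the finiteness of $M_0$ is used precisely to guarantee that $\tilde u$ is bounded above near $\zeta^{*}$, so the barrier argument of the previous paragraph applies. Concluding $\tilde u\le M$ on $\tilde\Omega$ gives $u\le M$ on $\Omega$.

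The main obstacle I anticipate is twofold. The analytic part is justifying that the barrier $h$ is genuinely subharmonic and not identically $-\infty$: this rests on the decreasing (hence monotone) convergence of the partial sums, the local finiteness shown above, and the fact that a countable set is polar, so that the submean inequality survives the limit. The bookkeeping part is the transfer of hypotheses under inversion---one must check that the \emph{only} obstruction at $\infty$ is absorbed into the still-countable set $\tilde A$, and treat separately the degenerate configuration $\overline\Omega=\C$ in which no exterior centre $p$ exists (there $\partial\Omega$ is polar off a countable piece and one argues by removability together with the Liouville property of bounded-above subharmonic functions on $\C$). Finally, the hypothesis that $A$ is a \emph{proper} subset of $\partial\Omega$ is exactly what is needed to keep at least one genuine good boundary point, ruling out vacuous cases such as $\Omega=\C$, where a bounded-above subharmonic function is constant but need not satisfy $u\le M$.
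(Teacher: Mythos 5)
The paper does not actually prove this statement: it is quoted verbatim from Burckel (\cite{B}, Theorem 7.15) and used as a black box in the proof of Lemma \ref{lemma}, so there is no in-paper argument to compare yours against; I can only assess your proof on its own terms.

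Your bounded case is correct and is the standard barrier argument: each $a_n$ lies on $\partial D$ rather than in $D$, so the partial sums of $h(z)=\sum_n 2^{-n}\log(|z-a_n|/d)$ are harmonic on $D$ and decrease to a limit that is finite by your bound $h(z)\ge \log(\mathrm{dist}(z,\partial D)/d)$, and the $\epsilon$-perturbation then goes through. The inversion step is also correct \emph{whenever a point $p\notin\overline\Omega$ exists}. The genuine gap is the remaining case $\overline\Omega=\C$, which you dismiss as degenerate but which is not: $\Omega=\C\smallsetminus[0,1]$ is an ordinary unbounded region with $\overline\Omega=\C$, whose boundary $[0,1]$ is uncountable and non-polar, so your claim that in this configuration ``$\partial\Omega$ is polar off a countable piece'' is false and the removability-plus-Liouville argument you propose does not apply. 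For such a domain no M\"obius map carries $\Omega$ onto a bounded region (every point of $\C_\infty$ is in the closure of $\Omega$), and the barrier $\log(|z-a_n|/d)$ is unavailable because there is no finite diameter to normalize by; handling the exceptional point at infinity then genuinely requires an extra input. The standard route is harmonic measure: polar subsets of $\partial_{\C_\infty}\Omega$ --- in particular $\{\infty\}$ and any countable set --- carry zero harmonic measure, whence $u(z)\le \int_{\partial\Omega}\varlimsup u \, d\omega_z\le M$. So your argument proves the theorem for all bounded regions and for all unbounded regions whose closure is a proper subset of $\C$, but not the full statement. For the only use made of Theorem \ref{maxmod} in this paper --- $\Omega=\mathbb H$ the open upper half-plane with $A=\emptyset$ in the proof of Lemma \ref{lemma} --- your proof is complete, since one may take $p$ in the lower half-plane.
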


We shall now present the required lemma on entire functions. In the following we shall interpret a radial function on $\R^n$ as an even function on $\R$.

\begin{lem}\label{lemma}
Let $f$ be an entire function on $\mathbb{C}^n$ and $\psi$ be a non-negative, radial, locally integrable function on 
$\mathbb{R}^n$ such that for positive constants $a$ and $C,$ we have 
\bea
|f(z)| &\leq& Ce^{a\|z\|}, \:\:\:\: \txt{ for all } z\in \mathbb{C}^n, \label{fCdecay}  \\ 
|f(x)| &\leq& Ce^{-\psi(x)}, \:\:\:\: \txt{ for all } x\in \mathbb{R}^n. \label{fRdecay}
\eea
If 
\bes 
\int_{\mathbb{R}}{\frac{\psi(r)}{1+r^2}dr}=\infty,
\ees 
then $f$ is identically zero on $\mathbb{C}^n.$
\end{lem}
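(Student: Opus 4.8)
The plan is to reduce the several-variable statement to its one-variable analogue and then to run a Phragm\'en--Lindel\"of / harmonic majorant argument in the upper half-plane, in which the two auxiliary results quoted above play the decisive role.

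\emph{Reduction to one variable.} Since $\psi$ is radial, for each unit vector $\omega$ in the real sphere $S^{n-1}$ I would consider the slice $g_\omega(\zeta) = f(\zeta\omega)$, $\zeta\in\C$. By (\ref{fCdecay}) this is an entire function of one variable with $|g_\omega(\zeta)|\le Ce^{a|\zeta|}$, and by (\ref{fRdecay}) together with the radiality of $\psi$ it satisfies $|g_\omega(t)|\le Ce^{-\psi(|t|)}$ for $t\in\R$. If I can show that the one-dimensional version of the lemma forces $g_\omega\equiv0$ for every $\omega$, then $f$ vanishes on $\R^n$, and since an entire function on $\C^n$ vanishing on the totally real subspace $\R^n$ must vanish identically, the lemma follows. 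Thus it suffices to treat the case $n=1$.

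\emph{The one-variable core.} Assume $g$ is entire, of exponential type $a$, with $|g(t)|\le Ce^{-\psi(|t|)}$ on $\R$ and $\int_\R \psi(r)(1+r^2)^{-1}\,dr=\infty$; I want to show $g\equiv0$. Suppose not. The function $u=\log|g|$ is subharmonic on $\C$, is bounded above by $\log C + a|z|$, and on the real axis satisfies $u(t)\le \log C-\psi(|t|)$. I would work in the open upper half-plane and seek the comparison inequality $u(z)\le a\,\Im z + P[b](z)$, where $b(t)=\log C-\psi(|t|)$ and $P[b]$ is its Poisson integral for the half-plane. To make sense of $P[b]$ (note that $b$ is only bounded above, not continuous) I would use Theorem \ref{uppsemicont} to produce a decreasing sequence of uniformly continuous functions $b_n\ge b$, form the harmonic extensions $P[b_n]$, establish the comparison with $b_n$ in place of $b$, and then let $n\to\infty$ by monotone convergence.

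\emph{Applying the maximum principle.} The comparison itself is where I would invoke Theorem \ref{maxmod}: the difference $u(z)-a\,\Im z-P[b_n](z)$ is subharmonic in the half-plane, its upper boundary limits are $\le 0$ off the countable set $A$ consisting of the real zeros of $g$ (where $u=-\infty$), and a Phragm\'en--Lindel\"of estimate reconciling the isotropic growth $a|z|$ of $u$ with the half-plane term $a\,\Im z$ shows that it is bounded above; Theorem \ref{maxmod} then yields the desired inequality throughout the half-plane. \emph{This boundedness-above step is the main obstacle}, since $|z|-\Im z$ is unbounded on the half-plane and one must exploit the exponential type of $g$ in every direction, together with the boundedness of $\log^+|g|$ on $\R$, to control the growth. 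Once the inequality is in hand I would evaluate it at any point $z_0$ of the upper half-plane with $g(z_0)\ne0$, which exists because $g\not\equiv0$; the left-hand side is then finite, while $P[b](z_0)$ equals $\log C$ minus a positive multiple (depending on $z_0$) of $\int_\R \psi(|t|)(1+t^2)^{-1}\,dt$. Finiteness of the left-hand side therefore forces $\int_\R \psi(r)(1+r^2)^{-1}\,dr<\infty$, contradicting the hypothesis. Hence $g\equiv0$, and by the reduction $f\equiv0$ on $\C^n$.
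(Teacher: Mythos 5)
Your proposal is essentially the paper's own argument: reduce to $n=1$ by slicing, normalize away the exponential type, bound $\log|g|$ on the closed upper half-plane by Phragm\'en--Lindel\"of, regularize the boundary data via Theorem \ref{uppsemicont}, compare with a Poisson integral using Theorem \ref{maxmod}, pass to the limit by monotone convergence, and conclude from the divergence of the logarithmic integral. Two remarks. First, the step you flag as the main obstacle is exactly where the paper works: it applies Phragm\'en--Lindel\"of separately on the two quadrants $\{x>0,y>0\}$ and $\{x<0,y>0\}$ to the function $e^{iaz}f(z)/C$, which is bounded by $1$ on both half-axes and of order one in each quadrant of opening $\pi/2$; this yields $\log|g|\le 0$ on all of $\overline{\mathbb H}$ and hence the needed boundedness above, with the exceptional set $A$ taken to be empty (real zeros of $g$ cause no trouble since $-\infty\le 0$). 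Second, as written you apply Theorem \ref{uppsemicont} to $b(t)=\log C-\psi(|t|)$, but $\psi$ is only assumed locally integrable, so $-\psi$ need not be upper semicontinuous; the paper sidesteps this by approximating $v=\log|g|$ itself (upper semicontinuous because $g$ is continuous), truncating below at $-n$, and only invoking the bound $\log|g(t)|\le -\psi(t)+\log C$ at the very last step inside the Poisson integral. Your radial slicing $g_\omega(\zeta)=f(\zeta\omega)$ is a harmless (indeed slightly cleaner) variant of the paper's coordinate slicing $z\mapsto f(\xi,z)$, which requires a short change-of-variables computation to see that the relevant one-dimensional integral still diverges.
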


\begin{proof}

First we shall prove the result for $n=1$. We consider the function 
\begin{equation*}
g(z)=\frac{1}{C}e^{iaz}f(z),\:\:\:\:\text{for all $z\in\C$,}
\end{equation*}
and observe that $g$ is an entire function. We want to apply Phragm\'{e}n-Lindel\"of theorem to show that for all $z$ in the closed upper half plane $\overline{\mathbb H}=\{z\in \C: \Im z \geq 0\}$ 
\begin{equation}
|g(z)|\leq 1,\label{boundone}
\end{equation}
(see \cite{SS}, P. 124, Theorem 3.4). Let $Q_1 = \{z = x + iy \in \mathbb C: x > 0, y>0\}$. 
It follows from the estimate (\ref{fCdecay}) that
\begin{equation*}
|g(iy)|=\frac{1}{C}e^{-ay}|f(iy)|\leq e^{-ay}e^{ay}=1,\:\:\:\:\text{for all $y>0$.}
\end{equation*}
It is also immediate from (\ref{fRdecay}) that 
\begin{equation*}
|g(x)|=\frac{1}{C}|f(x)| \leq e^{-\psi(x)}\leq 1, \:\:\:\: \txt{ for } x\in\R.
\end{equation*}
In particular, $g$ is bounded by $1$ on the positive real and positive imaginary axes. As $g$ satisfies the estimate (\ref{fCdecay}) we can apply the Phragm\'{e}n-Lindel\"of theorem to the sector $Q_1$ to obtain (\ref{boundone}). A similar argument for the quadrant $Q_2 = \{z = x + iy \in \mathbb C: x < 0, y>0\}$ proves the estimate (\ref{boundone}) for all $z \in \overline{\mathbb H}$. Since $g$ is an entire function, $\log |g|$ is a subharmonic function on $\C$ and 
\be \label{bound} 
\log |g(z)| \leq 0,\:\:\:\:\text{for all $z \in  \overline{\mathbb H}$.} 
\ee  
Now, we apply Theorem \ref{uppsemicont} for $X=\overline{\mathbb H}, v=\log |g|$, and $M=0$. Then there exists a decreasing sequence of uniformly continuous functions ${f_n}$ on $\overline{\mathbb H}$ such that $f_n \leq 0$ and $f_n(z)$ decreases to $v(z)$ for every $z\in \overline{\mathbb H }$. We define 
\begin{equation*}
v_n(z)= \max\{f_n(z), -n\},\:\:\:\: \txt{ for } z\in \overline{\mathbb H},\:\:n \in \mathbb N.
\end{equation*} 
It is clear that $\{v_n\}$ is a decreasing sequence of continuous functions. As $f_n$ takes only negative values it follows that $v_n(z)\in [-n,0]$ for all $z\in\overline{\mathbb H}$. In particular, $v_n$ is bounded for each $n\in\mathbb N$. We now claim that $\{v_n\}$ converges pointwise to $v$ on $\overline{\mathbb H}$. We first assume that $v(z)=-\infty$. Then $\{f_n(z)\}$ and $\{-n\}$ both converge to $-\infty$ and hence so does $\{v_n(z)\}$. Now assume that $v(z)$ is finite. In this case $f_n(z)\in (v(z)-1, v(z)+1)$ for all large $n$ and hence $v_n(z)=f_n(z)$ for all large $n\in\mathbb N$. It follows that $\{v_n(z)\}$ converges to $v(z)$ for all $z\in\overline{\mathbb H}$.
Let $U_n$ be the Poisson integral of the restriction of the function $v_n$ on $\R$ given by
\be \label{U_n}
U_n(x+iy)=\frac{1}{\pi}\displaystyle\int_{\R}{\frac{yv_n(t)}{y^2+(x-t)^2}dt},\:\:\:\: \txt{ for } x\in\R,\:y>0.
\ee
Since $v_n\in L^\infty(\R)$, the above integral exists and defines a harmonic function on the open upper half plane $\mathbb H=\{z\in \C: \Im z>0\}$. Moreover, since $v_n$ is continuous, we can extend $U_n$ to $\overline{\mathbb H}$ as a continuous function by letting $U_n(x) = v_n(x)$ for $x \in \R$ (\cite{SW}, P. 47, Theorem 2.1(b)).
We now define 
\begin{equation} \label{V_n}
V_n(z) = \log|g(z)|- U_n(z),\:\:\:\: \txt{ for } z\in\mathbb H.
\end{equation}
Since $U_n$ is harmonic, we get that $V_n$ is subharmonic on $\mathbb H$. Since
\begin{equation*}
v_n(t) \geq -n,\:\:\:\:\text{for all $t\in\R$,}
\end{equation*}
it follows from the definition of $U_n$ given in (\ref{U_n}) that 
\begin{equation*}
U_n(z)\geq -n,\:\:\:\: \txt{ for } z\in\mathbb H.
\end{equation*}
Using (\ref{bound}) we get that 
\begin{equation*}
V_n(z) \leq n,\:\:\:\:\text{for all $z\in \mathbb H$.}
\end{equation*}
In particular, $V_n$ is bounded above for each $n\in\mathbb N$. Since 
\begin{equation*}
v(z) = \log|g(z)| \leq v_n(z),\:\:\:\:\text{for all $z \in \overline{\mathbb H}$,}
\end{equation*}
it follows that
\begin{equation*}
\lim_{y \to 0} V_n(x+iy)=\log |g(x)|-v_n(x) \leq 0,\:\:\:\:\text{for all $x\in\R$.}
\end{equation*}
We now apply Theorem \ref{maxmod} for $\Omega= \mathbb H$, $u= V_n$ and $A= \phi$, the empty set to conclude that
\begin{equation*}
V_n(z) \leq 0,\:\:\:\:\text{for all $z\in \mathbb H$.}
\end{equation*}
It follows from (\ref{V_n}) that 
\begin{equation*}
\log|g(x+iy)|\leq \frac{1}{\pi}\int_{\R}{\frac{yv_n(t)}{y^2+(x-t)^2}dt}, \:\:\:\:\text{for all $y>0$, $x\in\R$}.
\end{equation*} 
Since $\{v_n\}$ is a decreasing sequence, by using monotone convergence theorem and taking limit as $n\rightarrow \infty$ in the inequality above we get \bes
\log|g(x+iy)| \leq \frac{1}{\pi}\displaystyle\int_{\R}{\frac{y\log|g(t)|}{y^2+(x-t)^2}dt}.
\ees 
The estimate (\ref{fRdecay}) now implies that
\begin{equation*}
\log|g(x+iy)| \leq -\frac{1}{\pi}\int_{\R}{\frac{y \psi(t)}{y^2+(x-t)^2}dt} \leq -C_{x,y}\int_{\R}{\frac{\psi(t)}{1+t^2}dt} = - \infty,
\end{equation*}
where $C_{x,y}$ is a positive constant which depends on $x$ and $y$.
So, for each $x\in \R$ and $y$ positive, it follows that $g(x+iy)=0$. As $f$ is an entire function it follows that $f(z)=0$ for all $z\in \C$. This proves the lemma for $n=1$.

Now, we shall prove the case $n>1$. Let $\xi \in \mathbb{R}^{n-1}$ be fixed and we define 
\bes 
g(z)=f(\xi,z), \:\:\:\: \txt{ for } z\in \mathbb{C},
\ees 
which is clearly an entire function on $\mathbb{C}$. Using (\ref{fCdecay}) we have for any $z \in \mathbb{C}$ 
\bes 
|g(z)|=|f(\xi,z)|\leq Ce^{a\|(\xi,z)\|} = Ce^{a\|(\xi,0)+(0,z)\|}\leq C_{\xi} e^{a|z|}.
\ees
Using (\ref{fRdecay}) we get that for any $x \in \mathbb{R}$ 
\bes
|g(x)|=|f(\xi,x)|\leq e^{-\psi(\xi,x)}=e^{-\psi_{\xi}(x)},
\ees 
where the function $\psi_{\xi}$ on $\R$ is defined as 
\bes
\psi_{\xi}(x)=\psi(\xi,x), \:\:\:\: \txt{ for } x \in \R.
\ees 
Since $\psi$ is radial, $\psi_{\xi}$ is an even function on $\R$. 
Moreover using the radiality of $\psi$ and the change of variable $\|\xi\|^2 + x^2 = r^2$ we have 
\beas
\int_{\mathbb{R}}{\frac{\psi_{\xi}(x)}{1+x^2}dx} 
&=& \displaystyle\int_{\mathbb{R}}{\frac{\psi(\sqrt{\|\xi\|^2 + x^2})}{1+x^2}dx} \\
&=& \displaystyle 2 \int_{\|\xi\|}^{\infty}{\frac{\psi(r)}{1+r^2-\|\xi\|^2} \frac{r}{\sqrt{r^2-\|\xi\|^2}}dr} \\
&\geq& 2 \displaystyle\int_{\|\xi\|}^{\infty}{\frac{\psi(r)}{1+r^2}dr} \\
&=& \infty.
\eeas 
Applying the lemma for the case $n=1$ on $g,$ we get that $g$ is zero on $\mathbb{C}$. So it follows that $f(\xi,z)$ is zero for all $z\in \mathbb{C}$. Since this is true for all $\xi \in \mathbb{R}^{n-1}$ we obtain that $f$ is identically 
zero on $\mathbb{R}^n$. Since $f$ is an entire function on $\mathbb{C}^n$ which vanishes on $\mathbb{R}^n$, we can conclude that $f$ vanishes on $\mathbb{C}^n$.

\end{proof}

We shall need few other facts to prove our first main theorem. Let $G$ be a connected, noncompact, semisimple Lie group with finite centre. We shall continue to assume the notation introduced in the previous section. For $\xi \in \widehat M$ let $\{e^{\xi}_j : j \in \mathbb{N}\}$ be an orthonormal basis of $\mathbb{H}_{\xi}$ consisting of $K$-finite vectors. For $g \in G,$ $\lambda \in \mathfrak a ^*$ and $m, n \in \mathbb{N}$ we define
\be \label{phimn}
\Phi^{m,n}_{\xi, \lambda}(g)= \left\langle  \pi_{\xi,\lambda}(g)e^\xi_m, e^\xi_n \right\rangle _\xi = \int_K e^{(i\lambda-\rho)H(g^{-1}k)} \left\langle e^\xi_m\left(\kappa(g^{-1}k)\right), e^\xi_n(k) \right\rangle dk.
\ee
The basis vectors $e^\xi_m$, $e^\xi_n \in \mathbb{H}_{\xi}$ being $K$-finite, actually belong to $C^{\infty}(K, \mathbb{H})$ and hence are bounded as functions into $\mathbb{H}$. Therefore it follows that for each $g \in G$ the integral defining $\Phi^{m,n}_{\xi, \lambda}(g)$ makes sense for $\la \in \mathfrak{a}_{\C}^*$. Moreover the function $g \mapsto \Phi^{m,n}_{\xi, \lambda}(g)$ is continuous and the function $\la \mapsto \Phi^{m,n}_{\xi, \lambda}(g)$ extends as an entire function of $\la \in \mathfrak{a}_{\C}^* \cong \C^l$. From (\ref{phimn}) and (\ref{philambda}) we get the following easy estimate 
\bes
\left|\Phi^{m,n}_{\xi, \lambda}(g)\right| \leq C \int_K e^{-(\Im(\la) + \rho)H(g^{-1}k)} dk = C \phi_{i\Im(\la)}(g), \:\:\:\: \txt{ for } g \in G.
\ees
Using the bi-$K$-invariance of $\phi_{i\Im(\la)}$ and (\ref{phiila}) we get that 
\be \label{Phi}
\left|\Phi^{m,n}_{\xi, \lambda}(g)\right| \leq C e^{\Im(\la)^+(\log a)} \phi_0(a) \leq C e^{\| \Im(\la)\|_B \|\log a\|_B } \phi_0(a), \:\:\:\: \txt{ for } g \in G,
\ee
where $g = k_1 a k_2$; $k_1, k_2 \in k$ and $a \in \overline{A_+}$. 

We have already defined $\pi_{\xi, \la}$ for all $\la \in \mathfrak{a}_{\C}^*$ and noted that $\pi_{\xi, \la}$  is a unitary operator on the Hilbert space $\mathbb{H}_{\xi}$ for $\la \in \mathfrak{a}^*$. In general for $\la \in \mathfrak{a}_{\C}^* \setminus \mathfrak{a}^*$, the representation $\pi_{\xi, \la}$ may not be unitary. However for $f \in C_c(G)$, it is easy to see that $\pi_{\xi, \la}(f)$ makes sense for all $\la \in \mathfrak{a}_{\C}^*$ and $\xi \in \widehat{M}$. We shall also need the following lemma which is essentially proved in \cite{CSS} using Harish-Chandra's subquotient theorem. 

\begin{lem} \label{lem}
Let $f \in C_c(G)$. If $\pi_{\xi, \lambda}(f) = 0$ for all $\la \in \mathfrak{a}_{\C}^*$ and $\xi \in \widehat{M}$ then $f$ is zero.
\end{lem}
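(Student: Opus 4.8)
The plan is to use Harish-Chandra's subquotient theorem to upgrade the vanishing of the principal series operators $\pi_{\xi,\lambda}(f)$ into the vanishing of $\pi(f)$ for \emph{every} irreducible unitary representation $\pi$ of $G$, and then to invoke the Plancherel theorem (equivalently, the decomposition of the regular representation into irreducibles) to conclude $f=0$. The role of \cite{CSS} is precisely to supply this transfer.

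First I would restate the hypothesis in terms of matrix coefficients. Since $f \in C_c(G) \subset L^1(G)$, for vectors $u,v \in \mathbb{H}_\xi$ we have $\langle \pi_{\xi,\lambda}(f)u,v\rangle_\xi = \int_G f(g)\,\langle \pi_{\xi,\lambda}(g)u,v\rangle_\xi\,dg$, so $\pi_{\xi,\lambda}(f)=0$ is equivalent to saying that $f$ integrates to zero against every $K$-finite matrix coefficient of $\pi_{\xi,\lambda}$; in the notation of (\ref{phimn}) this reads $\int_G f(g)\,\Phi^{m,n}_{\xi,\lambda}(g)\,dg = 0$ for all $m,n \in \mathbb{N}$, all $\xi \in \widehat M$, and all $\lambda \in \mathfrak{a}_{\C}^*$. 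By linearity the same holds for $\langle\pi_{\xi,\lambda}(g)u,w\rangle$ with $u,w$ any $K$-finite vectors. It is essential here that $\lambda$ ranges over the full complexification $\mathfrak{a}_{\C}^*$ rather than only over $\mathfrak{a}^*$, since the analytic continuation to non-unitary $\lambda$ is what will reach the tempered representations that are not themselves unitary principal series.

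Next I would fix an arbitrary irreducible unitary representation $\pi$ of $G$ and apply the subquotient theorem: the Harish-Chandra module of $\pi$ (its space of $K$-finite vectors) is isomorphic, as a $(\mathfrak{g},K)$-module, to a subquotient $V_1/V_2$ of the Harish-Chandra module of some non-unitary principal series $\pi_{\xi,\lambda_0}$, with $\xi \in \widehat M$ and $\lambda_0 \in \mathfrak{a}_{\C}^*$. Under this identification every $K$-finite matrix coefficient $g\mapsto \langle\pi(g)u,v\rangle$ of $\pi$ is an honest real-analytic function on $G$ determined by the $(\mathfrak{g},K)$-module structure alone, and it coincides with a matrix coefficient of $\pi_{\xi,\lambda_0}$ obtained by pairing a vector of $V_1$ with a $K$-finite functional in the annihilator $V_2^{\perp}$. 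Representing such a functional through the inner product of $\mathbb{H}_\xi$ exhibits it as a finite linear combination of the $\Phi^{m,n}_{\xi,\lambda_0}$. The reformulated hypothesis then forces $\int_G f(g)\,\langle\pi(g)u,v\rangle\,dg = 0$ for all $K$-finite $u,v$; since these integrals are exactly the entries $\langle\pi(f)u,v\rangle$ on the dense set of $K$-finite vectors, we get $\pi(f)=0$. Finally, with $\pi(f)=0$ for every irreducible unitary $\pi$, the Plancherel theorem gives that the left regular representation $L$ satisfies $L(f)=0$, so $f*\phi = L(f)\phi = 0$ for all $\phi \in C_c(G)$; taking an approximate identity yields $f=0$.

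I expect the delicate step to be the middle one, namely justifying that vanishing of the operator $\pi_{\xi,\lambda_0}(f)$ on the full principal series Hilbert space really does descend to the abstract subquotient $\pi$. The mismatch to be managed is that infinitesimal equivalence is an algebraic statement about $(\mathfrak{g},K)$-modules, whereas the operators $\pi(f)$ act on Hilbert spaces. The cleanest way around this is to phrase everything through $K$-finite matrix coefficients, which are genuine real-analytic functions against which $f \in C_c(G)$ may be integrated, and to observe that passing to an invariant subspace or to a quotient only restricts the admissible pairs of vectors and hence cannot produce a matrix coefficient outside the linear span of the $\Phi^{m,n}_{\xi,\lambda_0}$. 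This is exactly the point at which Harish-Chandra's subquotient theorem, and the argument of \cite{CSS}, carry the weight of the proof.
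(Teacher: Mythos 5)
Your argument is correct and is precisely the route the paper intends: the paper gives no proof of Lemma \ref{lem} itself but attributes it to \cite{CSS}, where exactly this combination of Harish-Chandra's subquotient theorem (transferring the vanishing of $\int_G f(g)\,\Phi^{m,n}_{\xi,\lambda}(g)\,dg$ for all $\lambda\in\mathfrak{a}_{\C}^*$ to the $K$-finite matrix coefficients of every irreducible unitary representation) and the Plancherel theorem is used to conclude $f=0$. The step you flag as delicate --- that infinitesimal equivalence determines the real-analytic $K$-finite matrix coefficients, so the subquotient's coefficients lie in the span of the $\Phi^{m,n}_{\xi,\lambda_0}$ --- is handled in \cite{CSS} in the same way.
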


Now we shall present the first main result, which is an analogue of Theorem \ref{euclidean} (a). 

\begin{thm} \label{semisimple}
Let $\psi:[0,\infty)\rightarrow[0,\infty)$ be a locally integrable function such that 
\bes 
I = \int_{0}^{\infty}{\frac{\psi(r)}{1+r^2}dr}.
\ees
\begin{enumerate}
\item[(a)] Suppose $f \in C_c(G)$ satisfies the estimate 
\be \label{decay} 
\|\pi_{\xi, \lambda}(f)\|_{HS} \leq C_{\xi}e^{-\psi(\|\lambda\|_B)}, \:\:\:\: \txt{ for } \xi \in \widehat M, \la \in \mathfrak a^* \cong \R^l,
\ee 
where $C_{\xi}$ is a positive constant depending on $\xi$. If $I=\infty$ then $f$ is zero on $G$.
\item[(b)] If $I$ is finite and $\psi$ is nondecreasing then there exists a nontrivial $f \in C_c^{\infty}(K \backslash G /K)$ satisfying the estimate (\ref{decay}).
\end{enumerate}
\end{thm}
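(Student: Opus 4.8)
The plan is to derive part (a) from the entire-function Lemma \ref{lemma} together with the injectivity statement of Lemma \ref{lem}, and to obtain part (b) by transporting the Euclidean construction of Theorem \ref{euclidean}(a) to $G$ through the Abel transform. Throughout I fix an orthonormal basis of $\mathfrak{a}^*$ relative to $B$, so that $(\mathfrak{a}^*,\|\cdot\|_B)$ is identified isometrically with Euclidean $\R^l$ and $(\mathfrak{a}^*_\C, B)$ with $\C^l$; under this identification the weight $\lambda \mapsto \psi(\|\lambda\|_B)$ becomes a radial function on $\R^l$, which is exactly the form required by Lemma \ref{lemma}.

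For part (a), fix $\xi \in \widehat M$ and $m,n \in \N$ and consider the matrix coefficient
\[
F^{m,n}_\xi(\lambda) = \langle \pi_{\xi,\lambda}(f)e^\xi_m, e^\xi_n\rangle_\xi = \int_G f(g)\,\Phi^{m,n}_{\xi,\lambda}(g)\,dg .
\]
Since $f \in C_c(G)$ and, for each fixed $g$, the map $\lambda \mapsto \Phi^{m,n}_{\xi,\lambda}(g)$ extends to an entire function on $\mathfrak{a}^*_\C \cong \C^l$ carrying the locally uniform bound (\ref{Phi}), a routine application of Morera's theorem and Fubini shows that $F^{m,n}_\xi$ is entire on $\C^l$. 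I would then check the two hypotheses of Lemma \ref{lemma}. Choosing $R$ with $\|\log a\|_B \leq R$ on $\mathrm{supp}\,f$, the estimate (\ref{Phi}) and the continuity of $\phi_0$ give $|F^{m,n}_\xi(\lambda)| \leq C e^{R\|\Im\lambda\|_B} \leq C e^{R\|\lambda\|}$, which is (\ref{fCdecay}); and Cauchy--Schwarz together with $\|e^\xi_m\|=\|e^\xi_n\|=1$ gives, for $\lambda \in \mathfrak{a}^*$, $|F^{m,n}_\xi(\lambda)| \leq \|\pi_{\xi,\lambda}(f)\|_{HS} \leq C_\xi e^{-\psi(\|\lambda\|_B)}$, which is (\ref{fRdecay}) for the radial weight $\psi(\|\cdot\|_B)$, whose one-dimensional integral equals $2I=\infty$. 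Lemma \ref{lemma} then forces $F^{m,n}_\xi \equiv 0$ on $\C^l$. As $m,n,\xi$ are arbitrary and $\{e^\xi_j\}$ is an orthonormal basis of $\mathbb H_\xi$, this yields $\pi_{\xi,\lambda}(f)=0$ for all $\lambda\in\mathfrak{a}^*_\C$ and all $\xi\in\widehat M$, and Lemma \ref{lem} gives $f=0$.

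For part (b), I would construct $f$ as the inverse Abel transform of a suitable Euclidean function. By the converse half of Theorem \ref{euclidean}(a) applied on $\R^l$ (valid since $I<\infty$ and $\psi$ is nondecreasing) there is a nonzero $h\in C_c^\infty(\R^l)$ with $|\widehat h(\lambda)|\leq Ce^{-\psi(\|\lambda\|)}$; averaging over the Weyl group, $\Psi = |W|^{-1}\sum_{s\in W} h\circ s$, gives a $W$-invariant $\Psi\in C_c^\infty(\R^l)$ whose transform still satisfies $|\widehat\Psi(\lambda)|\leq Ce^{-\psi(\|\lambda\|)}$ because $\|s\lambda\|=\|\lambda\|$, and which is nonzero as long as $\widehat\Psi(0)=\widehat h(0)=\int h\neq 0$, a condition one arranges in the Euclidean construction (alternatively one takes $h$ radial, hence automatically $W$-invariant). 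Regarding $\Psi$ as a $W$-invariant element of $C_c^\infty(A)$ and using the Helgason bijection $\mathcal A : C_c^\infty(K\backslash G/K)\to C_c^\infty(A)^W$, set $f=\mathcal A^{-1}\Psi$, a nonzero element of $C_c^\infty(K\backslash G/K)$. Then (\ref{abelsphtrel}) gives $\widetilde f(\lambda)=\mathcal F_A(\Psi)(\lambda)=\widehat\Psi(\lambda)$, so by (\ref{sphtftrel}) one has $\|\pi_\lambda(f)\|_{HS}=|\widetilde f(\lambda)|\leq Ce^{-\psi(\|\lambda\|_B)}$ for the trivial $\xi$.

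It remains to control $\|\pi_{\xi,\lambda}(f)\|_{HS}$ for nontrivial $\xi$, where I would exploit bi-$K$-invariance. Substituting $g\mapsto k_1 g k_2$ and averaging over $K$ shows $\pi_{\xi,\lambda}(f)=P\,\pi_{\xi,\lambda}(f)\,P$, where $P=\int_K \pi_{\xi,\lambda}(k)\,dk$ is the orthogonal projection onto the $K$-fixed subspace of $\mathbb H_\xi$. Since $\mathbb H_\xi=\mathrm{Ind}_M^K\xi$, Frobenius reciprocity identifies this subspace with the $M$-invariants $\mathbb H^M$, which vanish for every nontrivial irreducible $\xi$; hence $\pi_{\xi,\lambda}(f)=0$ for all nontrivial $\xi$ and (\ref{decay}) holds there trivially. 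I expect the main work to be bookkeeping rather than conceptual: in (a), rigorously justifying entirety and the exponential-type growth of $F^{m,n}_\xi$ from (\ref{Phi}); and in (b), guaranteeing that the symmetrized function stays nonzero while transferring the decay cleanly through the Abel and spherical transform identities.
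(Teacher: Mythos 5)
Your proposal follows the paper's argument essentially verbatim: in part (a) you form the same matrix coefficients $F^{m,n}_\xi$, establish exponential type from (\ref{Phi}) and the real-axis decay from (\ref{decay}), apply Lemma \ref{lemma}, and conclude via Lemma \ref{lem}; in part (b) you pull back a radial Euclidean function from Theorem \ref{euclidean}(a) through the Abel transform bijection exactly as the paper does. The only difference is that you make explicit the (correct) observation that $\pi_{\xi,\lambda}(f)=0$ for bi-$K$-invariant $f$ and nontrivial $\xi$ by Frobenius reciprocity --- a detail the paper leaves implicit when asserting that (\ref{decay}) holds for all $\xi\in\widehat M$.
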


\begin{proof}
First we shall prove (a). For fixed $\xi\in \widehat M$ and $m,n \in \mathbb{N}$ we  define 
\be \label{Fmn} 
F^{m,n}_\xi(\lambda) = \ds{\left\langle \pi_{\xi, \lambda}(f) e^\xi_m, e^\xi_n \right\rangle_\xi=  \displaystyle\int_{G}{f(g) ~ \Phi^{m,n}_{\xi,\lambda}(g)dg}}, \:\:\:\: \txt{ for } \lambda \in \mathfrak a^*.
\ee 
Since $f \in C_c(G)$ and $\Phi^{m,n}_{\xi,\lambda}(g)$ is a continuous function of $g \in G$ as well as an entire function of $\lambda \in \mathfrak a^*_{\mathbb{C}},$ it follows that $F^{m,n}_\xi$ extends as an entire function of 
$\lambda \in \mathfrak a^*_{\mathbb{C}}.$ From Lemma \ref{lem}, it is enough to 
prove that for each fixed  $m,n \in \mathbb{N}$, $F^{m,n}_\xi(\lambda)$ is zero for all $\lambda \in \mathfrak a^*_{\mathbb{C}}$. Using polar decomposition, (\ref{Fmn}) and (\ref{Phi}), it follows that 
\be \label{est}
|F^{m,n}_{\xi}(\lambda)|\leq\displaystyle\int_{K}{\displaystyle\int_{\overline{\mathfrak{a}_+}}{\displaystyle\int_{K}{|f(k_1(\exp H)k_2)|~e^{\|\Im(\lambda)\|_B\|H\|_B}~\phi_0(\exp H)~J(\exp H)~dk_1~dH~dk_2}}}.
\ee 
Since $f \in C_c(G)$, it follows from (\ref{est}), (\ref{phi0}) and (\ref{jest}) that there exists a constant $\gamma>0$ such that   
\be \label{fmnestc}
|F^{m,n}_\xi(\lambda)|\leq  C_f e^{\gamma\|\Im (\lambda)\|_B} \leq C_f e^{\gamma\|\lambda\|_B}, \:\:\:\: \txt{ for all } ~~ \lambda\in \mathfrak a^*_{\mathbb{C}} \cong \mathbb{C}^l.
\ee
Now by (\ref{decay}) and the fact that 
\bes
\left|\langle \pi_{\xi, \lambda}(f) e^\xi_m, e^\xi_n \rangle_\xi \right| \leq \|\pi_{\xi, \lambda}(f) \|_{HS}, \:\:\:\: \txt{ for } ~~ \lambda \in \mathfrak a^{*},
\ees 
we get that
\be \label{fmnestr}
|F^{m,n}_\xi(\lambda)| \leq C_{\xi} e^{-\psi(\|\lambda\|_B)}, \:\:\:\: \txt{ for all } ~~ \lambda \in \mathfrak a^{*} \cong \mathbb{R}^l.
\ee
Since all norms are equivalent on finite dimensional spaces, (\ref{fmnestc}), (\ref{fmnestr}) and Lemma \ref{lemma} implies that $F^{m,n}_\xi$ vanishes on $\mathfrak a^*_{\mathbb{C}} \cong \C^l.$ By varying $m,n$ over $\N$ and $\xi$ over $\widehat{M}$ it now follows from Lemma \ref{lem} that $f$ vanishes on $G$.

Now we shall prove (b). Since $I$ is finite and $\psi$ is nondecreasing, by Theorem \ref{euclidean} (a) there exists a nontrivial radial function $f_0 \in C_c^{\infty}(\R^l)$ satisfying (\ref{estintro}). Since $f_0$ is radial on $\R^l$, it can be thought of as a $W$-invariant function on $A \cong \R^l$. So there exists $f \in C_c^{\infty}(K \backslash G/K)$ such that $\mathcal{A}f = f_0$. Using (\ref{sphtftrel}) and (\ref{abelsphtrel}) it follows that $f \in C_c^{\infty}(K \backslash G/K)$ satisfies the estimate (\ref{decay}).

\end{proof}

\begin{rem}
If we think of a function on $X = G/K$ as a right-$K$-invariant function on $G$ then the following theorem is, in view of the relation (\ref{hftftrel}), an easy corollary of Theorem \ref{semisimple}.
\begin{thm}
Let $\psi$ and $I$ be as in Theorem \ref{semisimple}. 
\begin{enumerate}
\item[(a)] Let $f \in C_c(X)$ and the Helgason Fourier transform $\widetilde{f}$ of $f$ satisfies the estimate 
\be \label{esthft}
|\widetilde{f}(\la,kM)| \leq C e^{- \psi(\|\la\|_B)}, \:\:\:\: \txt{ for } \lambda \in \mathfrak a^{*}, ~ kM \in K/M.
\ee
If $I=\infty$ then $f=0$.
\item[(b)] If $I$ is finite and $\psi$ is nondecreasing then there exists a nontrivial $K$-invariant $f \in C_c^{\infty}(X)$ satisfying the estimate (\ref{esthft}).
\end{enumerate} 
\end{thm}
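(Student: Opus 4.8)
The plan is to obtain both parts as direct consequences of Theorem \ref{semisimple}, by regarding a function on $X = G/K$ as a right-$K$-invariant function on $G$. For part (a), I take $f \in C_c(X)$ satisfying (\ref{esthft}) and view it as an element of $C_c(G)$ invariant under right translation by $K$. The goal is then to verify the hypothesis (\ref{decay}) of Theorem \ref{semisimple}(a) for \emph{every} $\xi \in \widehat M$, not just for the class one series, after which that theorem forces $f = 0$. For part (b) I would instead run Theorem \ref{semisimple}(b) and reinterpret the bi-$K$-invariant function it produces as a $K$-invariant function on $X$.

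The mechanism for part (a) is to push the right-$K$-invariance through to the operators $\pi_{\xi,\lambda}(f)$. A change of variables in $\pi_{\xi,\lambda}(f) = \int_G f(g)\,\pi_{\xi,\lambda}(g)\,dg$, using $f(gk) = f(g)$ and right invariance of the Haar measure, shows that $\pi_{\xi,\lambda}(f) = \pi_{\xi,\lambda}(f)\,P_\xi$, where $P_\xi = \int_K \pi_{\xi,\lambda}(k)\,dk$ is, for $\lambda \in \mathfrak a^*$, the orthogonal projection onto the $K$-fixed vectors of $\pi_{\xi,\lambda}$. Since $\pi_{\xi,\lambda}|_K$ acts by left translations and is independent of $\lambda$, that space is finite dimensional of dimension $d_\xi = \dim \mathbb H^M$ and consists of the constant functions $\phi \equiv v$ with $v$ an $M$-fixed vector; in particular $\pi_{\xi,\lambda}(f) = 0$ whenever $\mathbb H^M = \{0\}$. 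Evaluating on such a $\phi$, the factor $\phi(\kappa(g^{-1}k)) = v$ is constant, so the defining formula collapses to $(\pi_{\xi,\lambda}(f)\phi)(k) = \left(\int_G f(g)\, e^{(i\lambda-\rho)H(g^{-1}k)}\,dg\right) v = \widetilde f(\lambda,kM)\, v$, reproducing exactly the Helgason-Fourier transform of (\ref{hftdefn}). Taking norms and using (\ref{esthft}) with $\int_{K/M} dk = 1$ gives $\|\pi_{\xi,\lambda}(f)\phi\|_\xi \leq \|\phi\|_\xi\, C\, e^{-\psi(\|\lambda\|_B)}$; summing over an orthonormal basis of the $d_\xi$-dimensional fixed space and using $\pi_{\xi,\lambda}(f) = \pi_{\xi,\lambda}(f)P_\xi$ yields $\|\pi_{\xi,\lambda}(f)\|_{HS} \leq \sqrt{d_\xi}\, C\, e^{-\psi(\|\lambda\|_B)}$, which is precisely (\ref{decay}) with $C_\xi = \sqrt{d_\xi}\,C$. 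Theorem \ref{semisimple}(a) then gives $f = 0$.

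For part (b), when $I$ is finite and $\psi$ is nondecreasing, Theorem \ref{semisimple}(b) produces a nontrivial $f \in C_c^\infty(K \backslash G/K)$ with $\|\pi_\lambda(f)\|_{HS} \leq C\, e^{-\psi(\|\lambda\|_B)}$. Regarding this bi-$K$-invariant $f$ as a $K$-invariant function in $C_c^\infty(X)$, its Helgason-Fourier transform is independent of $kM$ and coincides with the spherical transform, so by (\ref{sphtftrel}) one has $|\widetilde f(\lambda,kM)| = |\widetilde f(\lambda)| = \|\pi_\lambda(f)\|_{HS} \leq C\, e^{-\psi(\|\lambda\|_B)}$, which is (\ref{esthft}); nontriviality is inherited, completing this direction.

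The only genuine subtlety, and the step I would be most careful about, lies in part (a): the hypothesis (\ref{esthft}) controls a priori only the class one data $\widetilde f(\lambda,\cdot)$, whereas Theorem \ref{semisimple}(a) requires the decay (\ref{decay}) for every $\xi \in \widehat M$. The resolution is the observation above that, for a right-$K$-invariant $f$, the operator $\pi_{\xi,\lambda}(f)$ is supported entirely on the finite dimensional space of $K$-fixed vectors and there acts through multiplication by the \emph{same} Helgason-Fourier transform $\widetilde f(\lambda,kM)$. This is exactly the relation (\ref{hftftrel}) extended from the trivial $\xi$ to arbitrary $\xi$, and it is what makes the reduction routine; everything else is a direct invocation of the already-established theorems.
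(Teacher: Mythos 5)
Your proposal is correct and takes essentially the same route as the paper, which presents this theorem as an immediate corollary of Theorem \ref{semisimple} obtained by viewing $f$ as a right-$K$-invariant function on $G$ and invoking (\ref{hftftrel}). The only content you add is to make explicit the standard fact implicit in the paper's remark: right-$K$-invariance forces $\pi_{\xi,\lambda}(f)=\pi_{\xi,\lambda}(f)P_\xi$, so the operator is supported on the (at most one-dimensional) space of $K$-fixed vectors, where it is controlled by the Helgason--Fourier transform; this correctly supplies the hypothesis (\ref{decay}) for every $\xi\in\widehat M$, and part (b) is handled exactly as in the paper.
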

\end{rem}

\subsection{Complex Semisimple Lie Groups}
 
We shall prove an analogue of Theorem \ref{euclidean} (b) for bi-$K$-invariant functions on a noncompact, complex semisimple Lie group using the explicit expression of the elementary spherical functions $\phi_\la$ available in this case. First we shall recall the following result proved in \cite{BRS} which is needed in our proof.
 
\begin{lem}\label{openpoly}
Let $P$ be a polynomial on $\R^n$ and $\theta : \R^n \ra [0, \infty)$ be a decreasing radial function with $\lim_{\|\xi\| \ra \infty} \theta(\xi) = 0$ and 
\bes
I=\int_{\|\xi\|\geq 1}\frac{\theta (\xi)}{\|\xi\|^n}d\xi.
\ees
\begin{enumerate}
\item[(a)] Let $f\in L^1(\R^n)$ be a nontrivial function satisfying the estimate
\be \label{estlem}
|\widehat{f}(\xi)|\leq C |P(\xi)| e^{-\|\xi\|\theta(\xi)}, \:\:\:\: \txt{ for all } \xi \in \R^n.
\ee
If $f$ vanishes on a nonempty open set then $I$ is finite. 
\item[(b)] If $I$ is finite then there exists a nontrivial $f\in C_c^{\infty}(\R^n)$ satisfying (\ref{estlem}).
\end{enumerate}
\end{lem}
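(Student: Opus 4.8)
The plan is to deduce both parts from the Euclidean result already at our disposal, namely Theorem \ref{euclidean}(b), after first recording that the two integrability conditions agree. Writing $\theta$ as a function of $r=\|\xi\|$ and passing to polar coordinates, one has $\int_{\|\xi\|\geq 1}\frac{\theta(\xi)}{\|\xi\|^n}\,d\xi=\omega_{n-1}\int_1^\infty\frac{\theta(r)}{r}\,dr$, where $\omega_{n-1}$ denotes the surface measure of the unit sphere. Since $\frac{r}{1+r^2}\asymp\frac1r$ for $r\geq 1$, this is finite precisely when $\int_0^\infty\frac{r\theta(r)}{1+r^2}\,dr$ is finite (after replacing $\theta$ on $[0,1]$ by the constant $\theta(1)$ if needed, which leaves $I$ unchanged, preserves monotonicity and the validity of (\ref{estlem}), and makes $\psi(r):=r\theta(r)$ locally integrable). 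Thus the finiteness of $I$ here is equivalent to the finiteness of the integral in Theorem \ref{euclidean}(b) with $\psi(r)=r\theta(r)$.

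For part (b), assume $I<\infty$. Then $\int_0^\infty\frac{\psi(r)}{1+r^2}\,dr<\infty$ with $\psi(r)=r\theta(r)$, so the converse half of Theorem \ref{euclidean}(b) furnishes a nontrivial $g\in C_c^\infty(\R^n)$ with $|\widehat g(\xi)|\leq Ce^{-\|\xi\|\theta(\xi)}$ for all $\xi$. I would then simply differentiate: let $P(-i\partial)$ be the constant-coefficient operator with symbol $P$, so that $\widehat{P(-i\partial)g}(\xi)=P(\xi)\widehat g(\xi)$, and set $f:=P(-i\partial)g\in C_c^\infty(\R^n)$. This $f$ has the same compact support as $g$ and satisfies $|\widehat f(\xi)|=|P(\xi)|\,|\widehat g(\xi)|\leq C|P(\xi)|e^{-\|\xi\|\theta(\xi)}$, which is exactly (\ref{estlem}); it is nontrivial because $\widehat f=P\,\widehat g$ is a product of two nonzero entire functions ($\widehat g$ being entire by Paley--Wiener and $P\not\equiv 0$), hence not identically zero.

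For part (a) I would argue by contraposition: assume $I=\infty$ while the nontrivial $f\in L^1(\R^n)$ satisfies (\ref{estlem}) and vanishes on a nonempty open set, and aim to derive $f\equiv 0$, the required contradiction. The only obstruction to invoking Theorem \ref{euclidean}(b) directly is the polynomial weight $|P(\xi)|$, which I would neutralise by mollification. Fix a ball $B(x_0,\delta)$ on which $f$ vanishes and a mollifier $\phi\in C_c^\infty(B(0,\delta/2))$. Then $g:=f*\phi\in L^1(\R^n)$ vanishes on $B(x_0,\delta/2)$, and since $\widehat\phi$ is rapidly decreasing while $|P(\xi)|\leq C(1+\|\xi\|)^N$, the product bound gives $|\widehat g(\xi)|=|\widehat f(\xi)|\,|\widehat\phi(\xi)|\leq Ce^{-\|\xi\|\theta(\xi)}$, a clean estimate with the \emph{same} $\theta$. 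By the first paragraph the relevant integral for $\psi(r)=r\theta(r)$ diverges, so Theorem \ref{euclidean}(b) applies to $g$ and yields $f*\phi\equiv 0$. The same estimate holds for each member of an approximate identity $\{\phi_\varepsilon\}\subset C_c^\infty(B(0,\delta/2))$ (with a constant depending on $\varepsilon$), whence $f*\phi_\varepsilon\equiv 0$ for all small $\varepsilon$; letting $\varepsilon\to 0$, so that $f*\phi_\varepsilon\to f$ in $L^1$, forces $f\equiv 0$.

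I expect the genuinely substantial point to be conceptual rather than computational: all of the analytic content, namely the quasi-analyticity encoded by the Denjoy--Carleman theorem, is already carried inside Theorem \ref{euclidean}(b), so the work here reduces to two bookkeeping manoeuvres — matching the integrability conditions via polar coordinates, and removing the polynomial factor. The mollification in part (a) is the crux of the second manoeuvre: it replaces the polynomially relaxed bound (\ref{estlem}) by the sharp exponential bound that Theorem \ref{euclidean}(b) demands without touching $\theta$ (in particular without endangering its monotonicity or the divergence of $I$), at the harmless price of slightly shrinking the open set on which $f$ vanishes.
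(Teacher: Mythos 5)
Your proposal is correct, but it is worth noting at the outset that the paper does not prove this lemma at all: it is simply quoted from \cite{BRS} (``we shall recall the following result proved in \cite{BRS}''), so there is no internal proof to compare against. What you have done is reduce Lemma \ref{openpoly} to Theorem \ref{euclidean}(b) --- which the paper also imports from \cite{BRS} --- and that reduction is sound. The polar-coordinate computation showing $\int_{\|\xi\|\geq 1}\theta(\xi)\|\xi\|^{-n}\,d\xi \asymp \int_1^\infty \theta(r)r^{-1}\,dr \asymp \int_0^\infty r\theta(r)(1+r^2)^{-1}\,dr$ is right (and since $\theta$ is decreasing it is bounded near $0$, so $\psi(r)=r\theta(r)$ is automatically locally integrable; your modification on $[0,1]$ is not even needed). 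In (b), taking $f=P(-i\partial)g$ is the correct way to handle the zero set of $P$, and your nontriviality argument via the integral domain of entire functions is fine; do note that the statement tacitly requires $P\not\equiv 0$, since otherwise (b) is vacuously false. In (a), the mollification $g=f*\phi$ with $\phi$ supported in $B(0,\delta/2)$ does exactly what you claim: it preserves vanishing on a smaller ball and absorbs the polynomial weight into the rapid decay of $\widehat\phi$, leaving a bound of the form required by Theorem \ref{euclidean}(b) with the same $\theta$. Your final approximate-identity step works, though it can be shortened: once $\widehat f\,\widehat\phi\equiv 0$ for a single nonzero $\phi\in C_c^\infty$, the entire function $\widehat\phi$ vanishes only on a null set, so the continuous function $\widehat f$ is identically zero and hence $f=0$. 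In short, your argument is a legitimate and more self-contained route than the paper's bare citation, with the caveat that all the analytic substance still lives in Theorem \ref{euclidean}(b), which this paper likewise does not prove.
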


If $G$ is noncompact, complex semisimple, it is known that the Haar measure on $A$ corresponding to the polar decomposition is given by $\phi(H)^2 dH$ for $H \in \mathfrak{a}$ 
where $\phi(H)$ is defined by the formula
\be \label{phidefn}
\phi(H) = \sum_{s\in W}\det(s) ~ e^{s\rho(H)}, \:\:\:\: \txt{ for } H\in \mathfrak{a},
\ee
(see \cite{CN}, P. 907 and P. 910). It is also known that on a noncompact, complex semisimple Lie group, the elementary spherical functions are given by the expression
\be \label{philambdacomplex}
\phi_\lambda(H)=c(\lambda)\frac{\sum_{s\in W}{\det(s)e^{-is\lambda(H)}}}{\phi(H)}, \:\:\:\: \txt{ for } H\in \mathfrak{a}, ~ \lambda \in \mathfrak{a}^*,
\ee
and the function $\la \mapsto c(\lambda)^{-1},$ for $\la \in \mathfrak{a}^*$, is of polynomial growth (\cite{H2}, P. 432, Theorem 5.7). Thus for $f \in L^1(K \backslash G /K)$ the spherical transform $\widetilde{f}$ of $f$ is given by the integral
\be \label{sphericalft}
\widetilde f(\lambda)=\int_{\mathfrak{a}} f(H)\phi_{\lambda}(H)\phi(H)^2dH, \:\:\:\: \txt{ for } \lambda \in \mathfrak a^*.
\ee


\begin{thm} \label{complexsemisimple}
Let $G$ be a connected, noncompact, complex semisimple Lie group and $\theta:[0,\infty) \rightarrow [0,\infty)$ be a decreasing function with $\lim_{r \to\infty}\theta(r)=0$ and
\bes
I=\int_{1}^{\infty} \frac{\theta (r)}{r}dr.
\ees
\begin{enumerate}
\item[(a)] Let $f\in L^1(K \backslash G/K)$ satisfy the estimate
\be \label{sphericaldecay}
|\widetilde f(\lambda)| \leq C e^{-\|\lambda\|_B \theta(\|\lambda\|_B)}, \:\:\:\: \txt{ for } \lambda \in \mathfrak a^* \cong \R^l.
\ee
If $f$ vanishes on a nonempty open set in $G$ and $I = \infty$ then $f=0$. 
\item[(b)] If $I$ is finite then there exists a nontrivial $f\in C_c^{\infty}(K \backslash G/K)$ satisfying the estimate (\ref{sphericaldecay}).
\end{enumerate}
\end{thm}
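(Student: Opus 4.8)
The plan is to push both parts through the explicit formula (\ref{philambdacomplex}) for the spherical function and thereby reduce to the Euclidean statements of \S 1. Part (b) is the constructive direction and needs only the Abel transform, not the complex structure: by (\ref{abelsphtrel}) we have $\widetilde f=\mathcal F_A(\mathcal A f)$, and $\mathcal A$ is a bijection of $C_c^\infty(K\backslash G/K)$ onto the $W$-invariant functions in $C_c^\infty(A)$, so it suffices to exhibit a nontrivial $W$-invariant $h\in C_c^\infty(\mathfrak a)$ with $|\widehat h(\lambda)|\le Ce^{-\|\lambda\|_B\theta(\|\lambda\|_B)}$; then $f:=\mathcal A^{-1}h$ works. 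Taking $\psi(r)=r\theta(r)$, one has $\int_0^\infty\psi(r)(1+r^2)^{-1}\,dr<\infty$ if and only if $I=\int_1^\infty\theta(r)r^{-1}\,dr<\infty$ (the integrands are comparable for $r\ge1$ and the part on $[0,1]$ converges because $\theta$ is bounded), so the converse half of Theorem \ref{euclidean}(b) produces a nontrivial $h\in C_c^\infty(\R^l)$ with the required bound; since that construction may be taken radial, $h$ is $W$-invariant and part (b) follows.

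For part (a) the idea is to linearise $\phi_\lambda$. Put $g:=f\cdot\phi$, regarded as a function on $\mathfrak a\cong\R^l$, where $\phi(H)=\sum_{s\in W}\det(s)e^{s\rho(H)}$ as in (\ref{phidefn}). Substituting (\ref{philambdacomplex}) into (\ref{sphericalft}) cancels one factor of $\phi(H)$ and gives
\bes
\widetilde f(\lambda)=c(\lambda)\sum_{s\in W}\det(s)\int_{\mathfrak a}g(H)\,e^{-is\lambda(H)}\,dH=c(\lambda)\sum_{s\in W}\det(s)\,\widehat g(s\lambda),
\ees
with $\widehat g$ the Euclidean Fourier transform. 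Since $f$ is $W$-invariant and $\phi$ is $W$-skew, the function $g$ is $W$-skew and hence so is $\widehat g$, i.e. $\widehat g(s\lambda)=\det(s)\widehat g(\lambda)$; each summand therefore equals $\det(s)^2\widehat g(\lambda)=\widehat g(\lambda)$ and the identity collapses to $\widetilde f(\lambda)=|W|\,c(\lambda)\,\widehat g(\lambda)$. Thus $\widehat g(\lambda)=\widetilde f(\lambda)/(|W|\,c(\lambda))$, and because $\lambda\mapsto c(\lambda)^{-1}$ has polynomial growth there is a polynomial $P$ with
\bes
|\widehat g(\lambda)|\le C\,|P(\lambda)|\,e^{-\|\lambda\|_B\theta(\|\lambda\|_B)},\qquad\lambda\in\mathfrak a^*\cong\R^l,
\ees
which is exactly the hypothesis (\ref{estlem}) of Lemma \ref{openpoly}. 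The remaining hypotheses transfer directly: if $f$ vanishes on a nonempty open $U\subseteq G$ then it vanishes on the open bi-$K$-invariant set $KUK$, which meets $A_+$ in a nonempty open set, so $g$ vanishes on a nonempty open subset of $\R^l$; and for the radial $\theta$ the $l$-dimensional integral in Lemma \ref{openpoly} is a constant multiple of $I$, so $I=\infty$ makes it diverge. Lemma \ref{openpoly}(a) then precludes a nontrivial such $g$, forcing $g\equiv0$; as $\phi>0$ on $\mathfrak a_+$ this yields $f=0$.

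I expect the one genuinely delicate point to be verifying that $g=f\phi$ lies in $L^1(\R^l)$, as Lemma \ref{openpoly}(a) requires. Membership of $f$ in $L^1(K\backslash G/K)$ only provides $\int_{\mathfrak a}|f|\,\phi^2\,dH<\infty$, which near the walls (where $\phi$ is small) is weaker than $\int_{\mathfrak a}|f|\,|\phi|\,dH<\infty$. I would split $\mathfrak a$ along $\{|\phi|\ge1\}$ and $\{|\phi|<1\}$: on the former $|\phi|\le\phi^2$, so the integral is dominated by $\int|f|\phi^2<\infty$; the latter is an exponentially narrow neighbourhood of the walls and hence has finite Lebesgue measure, so it suffices to know that $g$ is bounded there, which follows from the decay of $\widehat g=\widetilde f/(|W|c)$ (under $I=\infty$ this bound is integrable, so $g$ is continuous and bounded). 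Alternatively one may first prove the statement for $f\in C_c^\infty(K\backslash G/K)$, where $g\in C_c^\infty(\R^l)$ is immediate, and then remove the restriction by approximation. The remaining ingredients---the Weyl-denominator cancellation, the $W$-skewness bookkeeping, and the polynomial bound on $c(\lambda)^{-1}$ from (\ref{philambdacomplex})---are routine.
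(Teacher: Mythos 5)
Your argument for both parts is essentially identical to the paper's: in part (a) you use the Weyl-denominator form of $\phi_\lambda$ to collapse, via the $W$-skewness of $g=f\phi$, to $\widetilde f(\lambda)=|W|\,c(\lambda)\,\widehat g(H_\lambda)$ and then invoke Lemma \ref{openpoly} together with the polynomial growth of $c(\lambda)^{-1}$, and in part (b) you pull back a radial Euclidean example through the Abel transform exactly as in Theorem \ref{semisimple}(b). The integrability of $g=f\phi$ that you single out as delicate is simply asserted in the paper's proof, so the scruple is legitimate; just note that your first proposed patch is circular, since it appeals to the decay of $\widehat g$ before $g\in L^1(\mathfrak a)$ (and hence $\widehat g$ itself) has been established.
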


\begin{proof}
First we shall prove (a). Using (\ref{philambdacomplex}) and (\ref{shlambda}) the spherical transform of $f$ can be written as
\beas
\widetilde f(\lambda) 
&=& c(\lambda)\sum_{s\in W} \det(s) \int_{\mathfrak{a}}{e^{-is\lambda(H)}f(H)\phi(H) dH} \\
&=& c(\lambda)\sum_{s\in W} \det(s) \int_{\mathfrak{a}}{e^{-iB(H,sH_\lambda)}f(H)\phi(H) dH}\\
&=& c(\lambda) \sum_{s\in W} \det s ~ \widehat g(sH_\lambda), \:\:\:\: \txt{ for } \lambda \in \mathfrak a^*,
\eeas
where the function $g$ on $\mathfrak a$ is defined as 
\bes
g(H)=f(H)\phi(H), \:\:\:\: \txt{ for } H \in \mathfrak a,
\ees 
and $\widehat{g}$ denotes the Euclidean Fourier transform of $g$. 
As $f \in L^1(K \backslash G/K)$ it follows that 
\bes
\int_{\mathfrak{a}} g(H)dH = \int_{\mathfrak{a}} f(H) \phi(H) dH < \infty.
\ees
Consequently $\widehat{g}$ is well defined. Since $f$ is bi-$K$-invariant and $\phi$ is odd under the action of the Weyl group, that is, 
\bes
\phi(sH) = \det(s) \phi(H), \:\:\:\: \txt{ for } H \in \mathfrak a, s \in W,
\ees
we get that
\bes
g(sH)= \det(s) g(H), \:\:\:\: \txt{ for } H \in \mathfrak a, s \in W.
\ees 
It follows that $\widehat{g}$ is also odd under the action of the Weyl group, that is,
\bes
\widehat{g}(sH_{\lambda})= \det(s) \widehat{g}(H_{\lambda}), \:\:\:\: \txt{ for } \lambda \in \mathfrak a^*, s \in W.
\ees 
Consequently the spherical transform of $f$ can be written as
\bes
\widetilde{f}(\lambda) = c(\lambda) |W| \widehat g(H_{\lambda}), \:\:\:\: \txt{ for } \lambda \in \mathfrak a^*,
\ees
where $|W|$ denotes the number of elements in the Weyl group $W$. From the estimate of $\widetilde{f}$ given in (\ref{sphericaldecay}) it follows that
\bes
|\widehat g(H_{\lambda})|\leq C |c(\lambda)|^{-1} e^{-\theta(\|\lambda\|_B)\|\lambda\|_B}, \:\:\:\: \txt{ for } \lambda \in \mathfrak a^* \cong \R^l.
\ees
Since $c(\la)^{-1}$ is of polynomial growth, using the equivalence of all norms on finite dimensional spaces we can apply Lemma \ref{openpoly} to the function $g$ on $\mathfrak{a}$ to get that $g$ is zero. Hence it follows that $f$ is zero. 

To prove (b), first we note that by Theorem \ref{euclidean} (b) we get a nontrivial radial function $f_0 \in C_c^{\infty}(\R^l)$ satisfying (\ref{estintro}). Then we proceed as in Theorem \ref{semisimple} (b) to construct $f\in C_c^{\infty}(K \backslash G/K)$ satisfying the estimate (\ref{sphericaldecay}).
\end{proof}

\section{Unique Continuation Property of Solutions to the Schr\"odinger Equation}

We consider the initial value problem for the time-dependent Schr\"odinger equation on $\R^n$ given by
\beas
\frac{\partial u}{\partial t}(x,t) ~ - ~ i\Delta u (x,t) &=& ~ 0, \:\:\:\: \txt{ for } (x,t) \in \R^n \times \R, \\
u(x,0) &=& f(x), \:\:\:\: \txt{ for } x \in \R^n.
\eeas 
Our aim is to obtain sufficient conditions on the behaviour of the solution $u$ at two different times $t = 0$ and $t = t_0$ which guarantee that $u \equiv 0$ is the unique solution of the above equation. It has recently been observed that uncertainty principles can be used to obtain such sufficient conditions. We refer the reader to \cite{EKPV} and the references therein for results in this regard. These results were further generalized in the context of noncommutative groups in \cite{C, PS, BTD, LM, BRS, BS}. In this section we wish to relate the theorems of Ingham and Paley-Wiener to the above mentioned problem in the context of symmetric spaces. We first deduce one such result for the damped Schr\"odinger Equation on $\R^n$. 

\subsection{Euclidean Space}

We consider the initial value problem for the time-dependent damped Schr\"odinger Equation on $\R^n$ given by
\begin{eqnarray}\label{dampschr}
\left\{\begin{array}{rcll} 
\displaystyle{\frac{\partial u}{\partial t}(x,t) ~ - ~ i(\Delta-c) u (x,t) } &=& ~ 0,
& \:\:\:\: \textmd{for } (x,t) \in \R^n \times \R, \\
u(x,0) &=& f(x), & \:\:\:\: \textmd{for } x \in \R^n,
\end{array}\right.
\end{eqnarray} 
where $\Delta$ denotes the Laplacian on $\R^n$, $c \in \R$ is the damping parameter and $f \in L^1(\R^n)$. It follows that the unique solution $u_t(x) = u(x,t)$ of (\ref{dampschr}) is characterized via the Euclidean Fourier transform of $u_t$ given by the equation
\be \label{characterize}
\widehat{u_t}(\xi) = e^{-it(\|\xi\|^2+c)} \widehat{f}(\xi), \:\:\:\: \textmd{ for } \xi \in \R^n.
\ee

\begin{thm} \label{dampschrthm}
Let $u$ be the solution to the equation (\ref{dampschr}) given by (\ref{characterize}), with initial value $f\in C_c(\R^n)$. Suppose there exists $t_0>0$ such that 
\be \label{estdampschr}
|u(x, t_0)| \leq C e^{-\psi(\|x\|)}, \:\:\:\: \txt{ for } x \in \R^n,
\ee
where $\psi:[0,\infty) \ra [0,\infty)$ is a locally integrable function. If 
\bes
\int_{1}^{\infty}{\frac{\psi(r)}{1+r^2}dt}=\infty,
\ees 
then $u$ is zero.
\end{thm}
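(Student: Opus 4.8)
The plan is to reduce the statement to Lemma \ref{lemma} by exploiting the explicit fundamental solution of the Schr\"odinger equation. First I would rewrite the solution at time $t_0$ using the Schr\"odinger kernel: from (\ref{characterize}) one has
\bes
u(x,t_0) = \frac{e^{-it_0 c}}{(4\pi i t_0)^{n/2}} \int_{\R^n} e^{i\|x-y\|^2/(4t_0)} f(y)\, dy.
\ees
Expanding $\|x-y\|^2 = \|x\|^2 - 2\, x\cdot y + \|y\|^2$ and pulling the factor $e^{i\|x\|^2/(4t_0)}$ outside the integral turns this into
\bes
u(x,t_0) = \frac{e^{-it_0 c}}{(4\pi i t_0)^{n/2}}\, e^{i\|x\|^2/(4t_0)}\, \widehat{g}\!\left(\frac{x}{2t_0}\right),
\ees
where $g(y) = e^{i\|y\|^2/(4t_0)} f(y)$ and $\widehat g$ denotes the Euclidean Fourier transform of $g$.

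The point of this chirp factorization is that $g \in C_c(\R^n)$ with the same compact support as $f$, so by the Paley-Wiener theorem $\widehat g$ extends to an entire function on $\C^n$ of exponential type: if $\mathrm{supp}\, f \subseteq \{\|y\| \leq R\}$ then $|\widehat g(z)| \leq C e^{R\|z\|}$ for all $z \in \C^n$, which is exactly the hypothesis (\ref{fCdecay}) of Lemma \ref{lemma}. Taking absolute values in the displayed formula (all prefactors have modulus one apart from the harmless constant $(4\pi t_0)^{-n/2}$), the decay hypothesis (\ref{estdampschr}) translates, after the substitution $\xi = x/(2t_0)$, into
\bes
\left|\widehat g(\xi)\right| \leq C\, e^{-\psi(2t_0\|\xi\|)}, \:\:\:\: \txt{ for } \xi \in \R^n.
\ees
Thus $\widehat g$ satisfies the real-axis estimate (\ref{fRdecay}) of Lemma \ref{lemma} with the non-negative, radial, locally integrable profile $\widetilde\psi(\xi) = \psi(2t_0\|\xi\|)$.

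It then remains to check that divergence of the Ingham integral survives the rescaling $r \mapsto 2t_0 r$. Interpreting $\widetilde\psi$ as an even function on $\R$, the change of variables $s = 2t_0 r$ gives
\bes
\int_{\R} \frac{\psi(2t_0 |r|)}{1+r^2}\, dr = 4 t_0 \int_0^\infty \frac{\psi(s)}{4t_0^2 + s^2}\, ds,
\ees
and since $4t_0^2/(4t_0^2+s^2)$ is comparable to $1/(1+s^2)$ for large $s$ while $\psi$ is locally integrable near the origin, this diverges precisely when $\int_1^\infty \psi(r)/(1+r^2)\,dr = \infty$, which is our hypothesis. Lemma \ref{lemma} then forces $\widehat g \equiv 0$, hence $g = 0$; as the Gaussian factor $e^{i\|y\|^2/(4t_0)}$ never vanishes we conclude $f = 0$ and therefore $u \equiv 0$. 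The main obstacle, and really the only nontrivial idea, lies in the first step: a pointwise decay bound on $u(\cdot,t_0)$ in the space variable is not by itself an estimate on any entire function of exponential type, and it is exactly the Schr\"odinger-kernel factorization that converts it into the real-axis decay of $\widehat g$, to which the subharmonicity and Phragm\'en--Lindel\"of argument underlying Lemma \ref{lemma} can be applied.
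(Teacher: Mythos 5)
Your proof is correct and follows essentially the same route as the paper: both rest on the chirp factorization $u(x,t_0)=c\,e^{i\|x\|^2/4t_0}\,\widehat{g}(x/2t_0)$ with $g(y)=e^{i\|y\|^2/4t_0}f(y)\in C_c(\R^n)$, after which the paper invokes Theorem \ref{euclidean}(a) while you unwind that theorem and apply Lemma \ref{lemma} directly (with the correct observation that the rescaling $r\mapsto 2t_0 r$ preserves divergence of the Ingham integral) --- a purely cosmetic difference.
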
 

\begin{proof}
The solution $u_t(x)=u(x,t)$ of (\ref{dampschr}) is given by 
\bes
u_t(x) = \gamma_{c,t} * f(x), \:\:\:\: \txt{ for } x \in \R^n, ~ t \in \R,
\ees
where the kernel $\gamma_{c,t}$ is defined by
\be \label{gammact}
\gamma_{c,t}(x) = (4\pi|t|)^{-n/2} e^{-ict} e^{-i\pi sign(t)n/4} e^{i\|x\|^2/4t}, \:\:\:\: \txt{ for } x \in \R^n, ~ t \in \R,
\ee
so that its Fourier transform is
\bes
\widehat{\gamma_{c,t}}(\xi) = e^{-i(\|\xi\|^2+c)t}, \:\:\:\: \txt{ for } \xi \in \R^n, ~ t \in \R,
\ees
(see \cite{PS}, P. 869). Here $sign(t) = t/|t|$ denotes the sign of $t$. Using the expression for $\gamma_{c,t}$ in (\ref{gammact}) we can express the solution $u(x,t_0)$ at time $t=t_0>0$ as
\be \label{solnft}
u(x,t_0) = (4\pi t_0)^{-n/2} e^{-ict_0} e^{-i\pi n/4} e^{i\frac{\|x\|^2}{4t_0}} \widehat{h}\left(\frac{x}{2t_0}\right)
\ee
where the function $h$ on $\R^n$ is defined by
\be \label{hdefn}
h(y) = e^{i\frac{\|y\|^2}{4t_0}} f(y), \:\:\:\: \txt{ for } y \in \R^n.
\ee
Using (\ref{estdampschr}) and (\ref{solnft}) and applying Theorem \ref{euclidean} (a)
to the function $h \in C_c(\R^n)$ we get that $h$ is zero. Hence $f$ is zero and so is $u$.
\end{proof}

\begin{rem}
It is easy to see from the relations (\ref{solnft}) and (\ref{hdefn}) that the result corresponding to Theorem \ref{euclidean} (b) can also be proved similarly if we assume the corresponding conditions on the function $f$ and the solution $u(\cdot, t_0)$.  
\end{rem}

\subsection{Riemannian Symmetric Space}

Let $X=G/K$ be a Riemannian symmetric space of noncompact type where $G$ is a noncompact, connected semisimple Lie group with finite center and $K$ is a maximal compact subgroup of $G.$ We have a $G$-invariant Riemannian metric on $X$ induced by the Killing form $B$ restricted to $\mathfrak{p}$ and we can form a Laplace-Beltrami operator $\Delta$ using this metric (see \cite{H2}). We consider the initial value problem for the time-dependent Schr\"odinger equation on $X$ given by
\begin{eqnarray}\label{schrX}
\left\{\begin{array}{rcll} 
\displaystyle{\frac{\partial u}{\partial t}(x,t) ~ - ~ i\Delta u (x,t) } &=& ~ 0,
& \:\:\:\: \textmd{for } (x,t) \in X \times \R, \\
u(x,0) &=& f(x), & \:\:\:\: \textmd{for } x \in X.
\end{array}\right.
\end{eqnarray}
For $g \in G$, we define $\sigma(g) = d(gK, K)$ where $d$ is the canonical distance function on  $X = G/K$ coming from the Riemannian metric on $X$ induced by the Killing form $B$. The function $\sigma$ is bi-$K$-invariant and continuous. If $g=k_1\exp H k_2$ with $g\in G, H\in \overline{\mathfrak a_+}$ and $k_1,k_2\in K$, then 
\bes 
\sigma(g)=\sigma(\exp H)=\|H\|_B.
\ees

\subsubsection{Complex Semisimple Lie Groups}

We shall first prove a unique continuation property of solution to the Schr\"odinger equation (\ref{schrX}) in the context of complex semisimple Lie groups with bi-$K$-invariant initial data corresponding to Theorem \ref{euclidean} (b). In this case, the solution $u(\cdot, t)$ is also bi-$K$-invariant and can be considered as a function on $\mathfrak{a}$. It turns out that the situation here is slightly different from that of Euclidean spaces. The reason is roughly speaking the exponential growth of the $G$-invariant measure on $G/K$. Consequently, to prove this unique continuation property in the context of complex semisimple Lie groups we need to impose more decay on the solution $u(x,t)$. We will also show that the unique continuation property does not hold in the absence of such extra decay (see Remark \ref{remark}). Our method of proof will follow that in \cite{C} and will use some of their notation and calculations. 

\begin{thm} \label{schrcomplexthm}
Let $\theta:[0,\infty)\rightarrow [0,\infty)$ be a decreasing function with $\lim_{|r|\to\infty}\theta(r)=0$ and $u$ be a solution to the equation (\ref{schrX}) with initial value $f\in L^1(K \backslash G/K)$. Suppose there exists $t_0>0$ such that 
\be \label{estcomplex}
|u(H, t_0)|\leq C ~ \phi_0(H)~ e^{-\|H\|_B\theta(\|H\|_B)}, \:\:\:\: \txt{ for } H \in \mathfrak{a}. 
\ee
If $f$ vanishes on an open set in $\mathfrak{a} \cong \R^l$ and 
\be \label{thetaest}
I = \int_{1}^{\infty}{\frac{\theta(r)}{r}dr}=\infty
\ee 
then $u$ is identically zero.
\end{thm}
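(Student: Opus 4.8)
The plan is to follow the strategy of Theorem \ref{dampschrthm}, reducing everything to the Euclidean Ingham-type result with a polynomial factor (Lemma \ref{openpoly}) by exploiting the explicit spherical function formula available on complex groups. First I would record how the Schr\"odinger flow acts on the spherical transform: since $\Delta\phi_\lambda = -(\|\lambda\|_B^2+\|\rho\|_B^2)\phi_\lambda$, the equation (\ref{schrX}) gives $\widetilde{u_{t_0}}(\lambda) = e^{-it_0(\|\lambda\|_B^2+\|\rho\|_B^2)}\widetilde{f}(\lambda)$, so that $\|\rho\|_B^2$ plays the role of the damping constant $c$ of the Euclidean model. Using the reduction carried out in the proof of Theorem \ref{complexsemisimple}, which writes $\widetilde{h}(\lambda) = c(\lambda)|W|\,\widehat{(h\phi)}(H_\lambda)$ for any bi-$K$-invariant $h$, and cancelling the common factor $c(\lambda)|W|$, this becomes the purely Euclidean relation
\[
\widehat{v}(\xi) = e^{-it_0(\|\xi\|_B^2+\|\rho\|_B^2)}\,\widehat{g}(\xi), \qquad \xi \in \mathfrak a^* \cong \R^l,
\]
where $g=f\phi$, $v=u(\cdot,t_0)\phi$, and $\mathfrak a^*$ is identified isometrically with $\R^l$ through the inner product $B$. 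Here $g \in L^1(\mathfrak a)$ (as already used in Theorem \ref{complexsemisimple}), so $\widehat g$ is defined, and the identity above determines $v$ as the Euclidean Schr\"odinger evolution of $g$; the identification $v = u(\cdot,t_0)\phi$ is secured by injectivity of the spherical transform.

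Next I would recognise the right-hand side as the Euclidean damped propagator with parameter $c=\|\rho\|_B^2$, so that, exactly as in (\ref{gammact})--(\ref{solnft}),
\[
v(H) = (4\pi t_0)^{-l/2} e^{-i\|\rho\|_B^2 t_0} e^{-i\pi l/4} e^{i\|H\|_B^2/4t_0}\,\widehat{G}\!\left(\tfrac{H}{2t_0}\right), \qquad G(y)=e^{i\|y\|_B^2/4t_0}g(y),
\]
whence $|\widehat G(H/2t_0)| = (4\pi t_0)^{l/2}|v(H)|$ pointwise. The crucial point --- and the reason the factor $\phi_0$ is forced into the hypothesis (\ref{estcomplex}) --- is that $\phi_0$ exactly compensates the exponential growth of $\phi$: using the $W$-invariance of $\phi_0$, the $W$-anti-invariance of $\phi$, the estimate (\ref{phi0}), and $|\phi(H)|=|\phi(H^+)|\leq |W|e^{\rho(H^+)}$, one gets $\phi_0(H)\,|\phi(H)| \leq C(1+\|H\|_B)^m$. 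Thus (\ref{estcomplex}) yields $|v(H)| \leq C(1+\|H\|_B)^m e^{-\|H\|_B\theta(\|H\|_B)}$, and after the substitution $\xi=H/2t_0$,
\[
|\widehat G(\xi)| \leq C(1+2t_0\|\xi\|_B)^m\, e^{-2t_0\|\xi\|_B\,\theta(2t_0\|\xi\|_B)}.
\]

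Finally I would apply Lemma \ref{openpoly} to $G$. Since $|G|=|g|=|f\phi|$, we have $G \in L^1(\R^l)$, and $G$ vanishes on the open set on which $f$ vanishes (multiplication by $\phi$ and by the unimodular Gaussian preserves this). The prefactor $(1+2t_0\|\xi\|_B)^m$ is dominated by $|P(\xi)|$ for a suitable polynomial $P$, while $\tilde\theta(\xi)=2t_0\theta(2t_0\|\xi\|_B)$ is decreasing, radial, and tends to $0$, with $\|\xi\|\tilde\theta(\xi)=2t_0\|\xi\|_B\theta(2t_0\|\xi\|_B)$. A change of variables gives $\int_{\|\xi\|\geq 1}\tilde\theta(\xi)\|\xi\|^{-l}\,d\xi = c\int_{2t_0}^{\infty}\theta(s)s^{-1}\,ds$, which diverges because $I=\infty$ and the missing piece $\int_1^{2t_0}\theta(s)s^{-1}\,ds$ is finite. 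Hence Lemma \ref{openpoly}(a), in contrapositive form, forces $G\equiv 0$, so $g=f\phi\equiv 0$; since $\phi$ is nonzero on the regular set (which has full measure) it follows that $f=0$, and therefore $u\equiv 0$.

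The main obstacle I anticipate is the decay-transfer step. One must verify carefully that the spherical-transform side of the Schr\"odinger flow collapses to the bare Euclidean propagator for $g$ after dividing by $c(\lambda)|W|$ (and that the resulting $v$ really is $u(\cdot,t_0)\phi$), and --- conceptually more important --- that the $\phi_0$-factor in (\ref{estcomplex}) is exactly what reduces the product $\phi_0\phi$ to polynomial growth; this is where the exponential volume growth of $G/K$ manifests itself and why more decay is demanded than in Theorem \ref{complexsemisimple}. The remaining points are routine bookkeeping: matching the kernel formula (\ref{solnft}) with $c=\|\rho\|_B^2$, checking the divergence of the transformed integral, and tracking the open-set vanishing through $G=e^{i\|\cdot\|_B^2/4t_0}f\phi$.
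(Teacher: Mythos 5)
Your proposal is correct and follows essentially the same route as the paper: express $u(\cdot,t_0)\phi$ via the Euclidean propagator applied to $g_f=e^{i\|\cdot\|_B^2/4t_0}f\phi$, use $\phi_0(H)|\phi(H)|\leq C(1+\|H\|_B)^m$ to reduce the hypothesis to a polynomial-times-$e^{-\|\xi\|\theta(\|\xi\|)}$ bound on $\widehat{g_f}$, and conclude with Lemma \ref{openpoly}. The only difference is presentational: the paper simply cites \cite{C} for the solution formula (\ref{solncomplex}), whereas you re-derive it from the spherical-transform relation, which is a legitimate (and slightly more self-contained) way to obtain the same identity.
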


\begin{proof}
It is proved in \cite{C} that the solution $u(H,t)$ of (\ref{schrX}) at $t =t_0$ can be written as
\be \label{solncomplex}
u(H,t_0) \phi(H) = C |W|^2 t_0^{-l/2} e^{-i\left(t_0\|\rho\|_B^2 - \frac{\|H\|_B^2}{4t_0}\right)} \widehat{g_f}\left(\frac{H}{2t_0}\right), \:\:\:\: \txt{ for } H \in \mathfrak{a}, 
\ee
where the function $g$ on $\mathfrak{a}$ is defined by
\be \label{gdefn}
g_f(H) = e^{i\frac{\|H\|_B^2}{4t_0}} f(H) \phi(H), \:\:\:\: \txt{ for } H \in \mathfrak{a},
\ee
where $\phi$ is given by (\ref{phidefn}). Since $f$ vanishes on an open set in $\mathfrak{a} \cong \R^l$, $g$ also vanishes on the same open set in $\mathfrak{a}$. From the estimate of the Jacobian $J$ given in (\ref{jest}) we get that 
\be \label{phiest}
\phi(H) = \sqrt{J(\exp H)} \leq C e^{\rho(H)}, \:\:\:\: \txt{ for } H \in \mathfrak{a}. 
\ee
It follows from (\ref{estcomplex}), (\ref{solncomplex}), (\ref{phiest}) and (\ref{phi0}) that for $H \in \mathfrak{a}$ 
\be \label{uest}
\left|\widehat{g_f}\left(\frac{H}{2t_0}\right)\right| \leq C \phi_0(H) e^{-\|H\|_B\theta(\|H\|_B)} e^{\rho(H)}  \leq C (1+\|H\|_B)^m e^{-\|H\|_B\theta(\|H\|_B)}.
\ee
As $g_f$ vanishes on an open set and $\widehat{g_f}$ satisfies (\ref{uest}) it follows from Lemma \ref{openpoly} that $g_f$ is zero. Therefore $f$ is zero and hence so is $u$.
\end{proof}

\begin{rem} \label{remark}
We give an example to show that if $0 \leq \alpha <1$ then 
\bes
|u(H, t_0)|\leq C ~ \phi_0(H)^{\alpha}~ e^{-\|H\|_B\theta(\|H\|_B)}, \:\:\:\: \txt{ for } H \in \mathfrak{a} \cong \R,
\ees
with $\theta$ as in Theorem \ref{schrcomplexthm} satisfying $I = \infty$ does not imply that $u \equiv 0$. We consider the group $G = SL(2,\C)$ (see \cite{H2}, P. 433; \cite{Te}, P. 313). In this case
\bes 
A=\left\{\left ( \begin{matrix} e^H & 0 \\ 0 & e^{-H} \\  \end{matrix} \right): H \in\R\right\} \txt{ and } \mathfrak{a}=\left\{A_H = \left (\begin{matrix} H & 0 \\ 0 & -H \\ \end{matrix} \right): H \in\R\right\}.
\ees
It is known that the Jacobian of the Haar measure is $\sinh ^2 2H$ so that we have 
\bes 
\phi(H) = \sinh 2H, \:\:\:\: \txt{ for } H \in \R,
\ees 
and the distance function is given by 
\bes
\|H\|_B = 4|H|, \:\:\:\: \txt{ for } H \in \R.
\ees 
The elementary spherical functions are given by  
\beas
\phi_\lambda(H) &=& C \frac{\sin \la H}{ \la \sinh 2H}, \:\:\:\: \txt{ for } \la, ~ H \in \R, \\
\phi_0(H) &=& C\frac{H}{\sinh 2H}, \:\:\:\: \txt{ for } H \in \R.
\eeas
We fix $0 \leq \alpha< 1$ and $t_0>0$. We will give an example of a non-zero initial value $f \in C_c(K \backslash G /K)$ such that the corresponding solution $u$ to the Schr\"odinger equation (\ref{schrX}) satisfies 
\be \label{exampleest}
|u(H, t_0)|\leq C ~ \phi_0(H)^{\alpha}~ e^{-\|H\|_B\theta(\|H\|_B)}, \:\:\:\: \txt{ for } H \in \mathfrak{a} \cong \R,
\ee
where $\theta$ is as in Theorem \ref{schrcomplexthm} satisfying (\ref{thetaest}).
We consider $\eta \in \R$ such that $0< \eta< 1-\alpha$ and define $\beta = 1-\alpha-\eta.$ For any $h \in C_c^{\infty}(\R)$ we have 
\be \label{betaest}
|\widehat{h}(H)| \leq C e^{\beta|H|}, \:\:\:\: \txt{ for all } H \in \R,
\ee 
We consider such a non-zero function $h$ which is supported in $[\beta', \beta] \subset [-\beta, \beta] \subset \R$ where $0 < \beta' < \beta$. Since $\theta$ is a function decreasing to zero, there exists $M_1>1$ such that  
\be \label{thetabound}
\theta(4|H|) < \frac{\eta}{4}, \:\:\:\: \txt{ for } |H| > M_1, 
\ee
and there exists $M_2>1$ such that 
\be \label{sinhest}
e^{|H|} \leq C \sinh2|H|, \:\:\:\: \txt{ for } |H| > M_2.
\ee 
From (\ref{betaest}), (\ref{thetabound}) and (\ref{sinhest}) it follows that for $M = \max\{M_1,M_2\} >1 $ and $|H|>M$
\bea \label{example}
|\widehat h(H)| 
&\leq& C e^{(1-\alpha)|H|}e^{-\eta|H|} \nonumber \\
&\leq& C |H|^{\alpha}(\sinh2|H|)^{1-\alpha} e^{-4|H|\theta(4|H|)} \nonumber \\
&=& C \left(\frac{|H|}{\sinh2|H|}\right)^{\alpha}(\sinh2|H|) e^{-4|H|\theta(4|H|)}  \nonumber \\
&=& C|\phi(H)| |\phi_0(H)|^{\alpha} e^{-4|H|\theta(4|H|)}.
\eea
We define a function $f \in C_c^{\infty}(\R)$ by 
\be \label{fdefn}
f(H)= \frac{1}{2t_0} e^{-4i\frac{|H|^2}{t_0}}\phi(H)^{-1}h\left(\frac{H}{2t_0}\right), \:\:\:\: \txt{ for } H\in \R.
\ee 
This is well defined since $\phi$ vanishes only at $0$ and $h$ is supported away from zero. We consider the solution $u(H,t)$ of the system (\ref{schrX}) with the initial value $f$. It follows from (\ref{gdefn}) and (\ref{fdefn}) that 
\bes
g_f (H) = \frac{1}{2t_0} h\left(\frac{H}{2t_0}\right), \:\:\:\: \txt{ for } H\in \R. 
\ees
So by (\ref{solncomplex}) 
\bes
|u(H,t_0)||\phi(H)| = C |\widehat{h}(H)|, \:\:\:\: \txt{ for } H \in \R.
\ees
Hence by (\ref{example}) we get that 
\bes
|u(H,t_0)||\phi(H)| \leq C |\phi(H)| |\phi_0(H)|^{\alpha} e^{-4|H|\theta(4|H|)}, \:\:\:\: \txt{ for } |H| > M.
\ees
Since by continuity the function 
\bes
H \mapsto u(H,t_0) e^{4|H|\theta(4|H|)} (\phi_0(H))^{-\alpha} 
\ees
is bounded on the compact set $[0,M]$ it follows that the solution $u$ satisfies (\ref{exampleest}) with non-zero initial data $f \in C_c^{\infty}(\R)$.

\end{rem}

\subsubsection{Symmetric Space of Noncompact Type}

We shall now prove a unique continuation property of solution to the Schr\"odinger equation (\ref{schrX}) in the context of Riemannian symmetric spaces of noncompact type using Theorem \ref{dampschrthm}. As in the case of complex semisimple Lie groups, here also we need to impose extra decay on the solution $u(x,t)$ and using a similar example we will show that such decay is necessary (see Remark \ref{remarksym}). However, here we shall prove our result by reducing the problem to the Euclidean case via the Radon transform. It can be seen that for the initial data $f \in L^2(X)$ there exists unique solution $u(\cdot, t) \in L^2(X)$ satisfying the Schr\"odinger equation (\ref{schrX}) in the sense of distributions. However the Radon transform is not defined in general for functions in $L^2(X)$. So we need to consider our initial data $f$ in the $L^2$-Schwartz space $\mathcal{S}(X)$ consisting of smooth rapidly decreasing functions on $X$, on which the Radon transform happens to be defined. We shall start with the definition of $\mathcal{S}(X)$.

Let $\mathcal{D}_L(G)$ and $\mathcal{D}_R(G)$ denote the algebra of left invariant and that of right invariant differential operators on $G$ respectively. The $L^2$-Schwartz space $\mathcal{S}(G)$ is defined as the space of smooth functions $f$ on $G$ such that for each $D \in \mathcal{D}_L(G)$, $E \in \mathcal{D}_R(G)$ and $l \in \N$
\be \label{schwartzsp}
\sup_{g \in G} |(1+\sigma(g))^l \phi_0(g)^{-1} (DEf)(g)| < \infty.
\ee 
The $L^2$-Schwartz space $\mathcal{S}(X)$ is then defined as the space of $f \in \mathcal{S}(G)$ which are right invariant under $K$ (see \cite{H1}, P. 214). 

For $\lambda\in \mathfrak{a}_{\C}^*$ and $kM\in K/M$ we define the function $e_{\lambda, k}:X \rightarrow \C$ by 
\bes
e_{\lambda, k}(x)= e^{(i\lambda-\rho)H (x^{-1}k)}, \:\:\:\: \txt{ for }  x = gK \in X, g \in G.
\ees
It is known that these functions $e_{\lambda, k}$ appearing in the definition of the Helgason-Fourier transform (\ref{hftdefn}) are eigenfunctions of the Laplace-Beltrami operator $\Delta$ given by
\bes
\Delta e_{\lambda, k}= -(\|\lambda\|_B^2+\|\rho\|_B^2)e_{\lambda, k}, \:\:\:\: \txt{ for } \lambda \in \mathfrak{a}_{\C}^*, ~ kM\in K/M,
\ees
(see \cite{H1}, P. 99 and \cite{H2}, Ch. II). It follows that for $f \in L^2(X)$ the unique solution $u(\cdot, t) \in L^2(X)$ satisfying the Schr\"odinger equation (\ref{schrX}) is characterized by 
\be \label{hftsoln}
\widetilde{u_t}(\lambda, kM)= e^{-it\left(\|\lambda\|_B^2+\|\rho\|_B^2\right)}\widetilde{f}(\lambda, kM), \:\:\:\: \txt{ for } kM \in K/M \txt{ and } \lambda \in \mathfrak{a}^*.
\ee

However we are interested in the special case where the initial data $f \in \mathcal{S}(X)$. In this case it turns out that the solution $u_t(\cdot) = u(t,\cdot) \in \mathcal{S}(X)$ (see \cite{PS}, P. 872; \cite{E}, Theorem 4.1.1). It follows that that the analysis of solutions of the Schr\"odinger equation on $\mathbb{R}^n$ carries out to $X$ when the Fourier transform on $\mathbb{R}^n$ is replaced by the Helgason-Fourier transform. For any $f \in \mathcal{S}(X)$ the Radon transform $Rf$ is well defined (see \cite{H1}, P. 218, Theorem 1.15). From (\ref{hftradon}) and (\ref{hftsoln}) it follows that for $\lambda \in \mathfrak{a}^*$ and fixed $kM \in K/M$ we have
\be \label{radonsoln}
\mathcal{F}_A((Ru_t)(kM,\cdot)(\lambda)=e^{-it\left(\|\lambda\|_B^2+\|\rho\|_B^2\right)}\mathcal{F}_A((Rf)(kM,\cdot)(\lambda).
\ee

We shall recall few facts which will be needed. Firstly, the function $\sigma$ satisfies the following inequality
\be\label{sigmaineq}
\sigma(an) \geq \sigma(a), \:\:\:\: \txt{ for all } a\in A, ~ n\in N 
\ee
(see \cite{GV}, Lemma 6.2.7). Moreover, there exists a non-negative integer $m \geq 0$ such that for some constant $C_0>0$,
\be \label{unifest}
e^{\rho \log a}\int_{N}{\phi_0(an)~(1+\sigma(an))^{-m}dn} \leq C_0, \:\:\:\: \txt{ for all } a \in A,
\ee
(see \cite{GV}, P. 264, Theorem 6.2.3). We shall now present the unique continuation property for solutions to the Schr\"odinger equation (\ref{schrX}) for symmetric spaces.

\begin{thm} \label{schrthm}
Let $u(x,t)$ be a solution to the equation (\ref{schrX}) with initial value $f\in C_c^{\infty}(X)$. Suppose there exists $t_0>0$ such that 
\be \label{hypo}
|u(x, t_0)| \leq C ~ \phi_0(x)~e^{-\psi(\sigma(x))}, \:\:\:\: \txt{ for } x \in X,
\ee
where $\psi:[0,\infty) \ra [0,\infty)$ is a non-decreasing function. If 
\bes
I = \int_{1}^{\infty}{\frac{\psi(r)}{1+r^2}dr}=\infty,
\ees 
then $f$ is zero on $X$.
\end{thm}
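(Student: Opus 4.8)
The plan is to reduce the equation to a damped Schr\"odinger equation on $\R^l$ via the Radon transform and then invoke Theorem \ref{dampschrthm}. Fix $kM \in K/M$ and put $v_t(H) = (Ru_t)(kM, \exp H)$, regarded as a function on $\mathfrak{a} \cong \R^l$ equipped with the inner product $B$. By (\ref{radonsoln}) the family $\{v_t\}$ solves the damped Schr\"odinger equation (\ref{dampschr}) on $\R^l$ with damping parameter $c = \|\rho\|_B^2$ and initial value $v_0 = Rf(kM, \cdot)$. Since $f \in C_c^{\infty}(X)$ is supported in a ball $\{x : \sigma(x) \le R\}$, the left-$K$-invariance of $\sigma$ together with (\ref{sigmaineq}) gives $\sigma(k(\exp H)n \cdot o) = \sigma((\exp H)n) \ge \|H\|_B$, so $Rf(kM, \exp H) = 0$ once $\|H\|_B > R$; as $Rf$ is smooth, $v_0 \in C_c^{\infty}(\R^l)$. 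Thus the structural hypotheses of Theorem \ref{dampschrthm} hold, and the whole proof reduces to establishing a pointwise bound $|v_{t_0}(H)| \le C e^{-\Psi(\|H\|_B)}$ for some $\Psi$ with $\int_1^{\infty}\frac{\Psi(r)}{1+r^2}dr = \infty$.

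This decay estimate is the crux of the matter. From (\ref{radon}) and the bi-$K$-invariance of $\phi_0$ and $\sigma$, the hypothesis (\ref{hypo}) yields
\bes
|v_{t_0}(H)| \le C e^{\rho(H)} \int_N \phi_0((\exp H)n) e^{-\psi(\sigma((\exp H)n))} dn.
\ees
The tempting shortcut of replacing $e^{-\psi(\sigma((\exp H)n))}$ by $e^{-\psi(\|H\|_B)}$ (legitimate by (\ref{sigmaineq}) and the monotonicity of $\psi$) is of no use, since the residual integral $e^{\rho(H)}\int_N \phi_0((\exp H)n) dn$ is the Abel transform of the non-integrable function $\phi_0$; the uniform bound (\ref{unifest}) controls it only after the weight $(1+\sigma(an))^{-m}$ is inserted, and a slowly increasing $\psi$ need not dominate $m\log(1+r)$, so $e^{-\psi(\sigma)}$ cannot by itself supply that weight.

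The way around this is to bring in the regularity of the solution. As $f \in C_c^{\infty}(X) \subset \mathcal{S}(X)$, one has $u_{t_0} \in \mathcal{S}(X)$, so (\ref{schwartzsp}) with $D = E = \mathrm{id}$ gives $|u_{t_0}(x)| \le C_N \phi_0(x)(1+\sigma(x))^{-N}$ for every $N$. Taking the geometric mean of this bound (with $N = 2m$) and the hypothesis (\ref{hypo}) produces
\bes
|u_{t_0}(x)| \le C \phi_0(x)(1+\sigma(x))^{-m} e^{-\psi(\sigma(x))/2}, \:\:\:\: \txt{ for } x \in X.
\ees
Inserting this into the integral for $v_{t_0}$, using $\sigma((\exp H)n) \ge \|H\|_B$ and the monotonicity of $\psi$ to pull out $e^{-\psi(\|H\|_B)/2}$, and finally applying (\ref{unifest}) to the surviving integral, I obtain the desired bound
\bes
|v_{t_0}(H)| \le C e^{-\psi(\|H\|_B)/2}, \:\:\:\: \txt{ for } H \in \mathfrak{a},
\ees
uniformly in $kM$.

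Taking $\Psi = \psi/2$, we have $\int_1^{\infty}\frac{\Psi(r)}{1+r^2}dr = \tfrac12 I = \infty$, so Theorem \ref{dampschrthm} applies to the damped Schr\"odinger solution $v_t$ and forces $v_t \equiv 0$; in particular $v_0 = Rf(kM, \cdot) = 0$ for every $kM \in K/M$. Hence $Rf$ vanishes identically, and since the Radon transform is injective on $L^1(X)$ and $f \in C_c^{\infty}(X) \subset L^1(X)$, I conclude that $f = 0$. The only genuinely delicate steps are the compact support of $v_0$ and the Schwartz-interpolation estimate above; the remainder is a faithful transcription of the Euclidean computation behind Theorem \ref{dampschrthm}.
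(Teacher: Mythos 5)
Your proposal is correct and follows essentially the same route as the paper: reduction to the damped Euclidean Schr\"odinger equation via the Radon transform, the geometric-mean interpolation of the hypothesis with the Schwartz-space bound $|u_{t_0}(x)|\leq C\phi_0(x)(1+\sigma(x))^{-2m}$ to insert the weight $(1+\sigma)^{-m}$, the use of $\sigma(an)\geq\sigma(a)$ with the monotonicity of $\psi$ and the uniform estimate to control the integral over $N$, and finally Theorem \ref{dampschrthm} plus injectivity of the Radon transform. Your explicit verification of the compact support of $Rf(kM,\cdot)$ and your remark on why the naive shortcut fails are welcome elaborations of steps the paper leaves implicit, but the argument is the same.
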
 \label{SchrSM}

\begin{proof}
Since $f \in C_c^{\infty}(X) \subset \mathcal{S}(X)$, it follows that $u_t \in \mathcal{S}(X)$. Viewing $u_t$ as a right $K$-invariant function on $G$, from the definition of $L^2$-Schwartz space (\ref{schwartzsp}) we get that for each $l \in \N$
\be \label{polydecay}
|u_t(g)| \leq C \phi_0(g) (1+\sigma(g))^{-2l}, \:\:\:\: \txt{ for } g \in G.
\ee
By multiplying the inequalities in (\ref{hypo}) and (\ref{polydecay}) it follows that for each $l \in \N$
\be \label{finaldecay}
|u(x, t_0)| \leq C ~ \phi_0(x)~(1+\sigma(x))^{-l}e^{-\frac{1}{2}\psi(\sigma(x))}, \:\:\:\: \txt{ for } x \in X.
\ee
Now, from the expression of $u_t$ given in (\ref{characterize}) and the relation (\ref{radonsoln}) it follows that for fixed $kM \in K/M$, $(Ru_t)(kM,\cdot)$ is a solution to the system (\ref{dampschr}) with initial value $(Rf)(kM,\cdot)$ and damping parameter $\|\rho\|_B^2$. We wish to apply Theorem \ref{dampschrthm} to this particular solution. Let us fix $kM \in K/M$. Since $f \in C_c^{\infty}(X)$ it is easy to see that $Rf(kM,\cdot) \in C_c^{\infty}(A)$. We need to prove that $(Ru_{t})(kM,\cdot)$ satisfies the estimate (\ref{estdampschr}) at time $t=t_0$. Indeed, for $a \in A$ and $l = m$ using (\ref{radon}) and (\ref{finaldecay}) we get that  
\beas
\left|(Ru_{t_0})(kM,a)\right| &=& e^{\rho \log a}\int_{N}|{u_{t_0}(kan\cdot o)|~dn} \nonumber\\
&\leq& C~e^{\rho \log a}\int_{N}{\phi_0(an)~(1+\sigma(an))^{-m}e^{-\frac{1}{2}\psi(\sigma(an))}~dn}.
\eeas
Since $\psi$ is non-decreasing using (\ref{sigmaineq}) it follows that 
\be \label{psiest}
\psi(\sigma(an))\geq \psi(\sigma(a)), \:\:\:\: \txt{ for all } a\in A, ~ n\in N.
\ee
Using (\ref{psiest}) and (\ref{unifest}) we get that
\bes
|(Ru_{t_0})(kM,a)|\leq Ce^{-\frac{1}{2}\psi(\sigma (a))}, \:\:\:\: \txt{ for all } a\in A.
\ees
Using the equivalence of norms in finite dimensional spaces, we can invoke Theorem \ref{dampschrthm} to get that $(Rf)(kM,\cdot)$ is zero on $A$. Since this is true for all $kM \in K/M$ we get that $Rf$ is zero on $K/M \times A$. By the injectivity of Radon transform (\cite{H1}, P. 220, Corollary 1.6) we conclude that  $f$ is zero. Hence so is $u$.
\end{proof}

\begin{rem} 
\begin{enumerate} 
\item \label{remarksym}
We give an example to show that if $0 \leq \alpha <1$ then there exists a non-decreasing function $\psi$ (depending on $\alpha$) and a non-zero initial value $f \in C_c(K \backslash G /K)$ such that the corresponding solution $u$ to the Schr\"odinger equation (\ref{schrX}) satisfies
\be \label{exampledecay}
|u(H, t_0)|\leq C ~ \phi_0(H)^{\alpha}~ e^{-\psi(\|H\|_B)}, \:\:\:\: \txt{ for } H \in \mathfrak{a} \cong \R,
\ee
where $\psi$ is as in Theorem \ref{schrthm} with $I = \infty$. We again consider the group $G = SL(2,\C)$ and as in Remark \ref{remark} we get $h \in C_c^{\infty}(\R)$ supported in $[\beta', \beta] \subset [-\beta, \beta]$ with $0 < \beta' < \beta$ such that
\bes
|\widehat{h}(H)| \leq C e^{\beta|H|}, \:\:\:\: \txt{ for all } H \in \R,
\ees
where $\beta = 1-\alpha-\eta$ and $0< \eta< 1-\alpha$. We now define
\be \label{psidefn}
\psi(H)= \eta |H|, \:\:\:\: \txt{ for } H \in \R.
\ee
Using (\ref{sinhest}) and (\ref{psidefn}) in (\ref{example}) we get that for $|H| > M_2$
\bes
|\widehat h(H)| \leq C|\phi(H)| |\phi_0(H)|^{\alpha} e^{-\psi(H)}.
\ees
Defining $f \in C_c^{\infty}(\R)$ by (\ref{fdefn}) we conclude as in Remark \ref{remark}
that $u$ satisfies (\ref{exampledecay}) with non-zero initial data $f \in C_c^{\infty}(\R)$.

\item It seems to be an interesting problem to see whether Theorem \ref{complexsemisimple} and Theorem \ref{schrcomplexthm} can be generalized to general Riemannian symmetric spaces of noncompact type.

\end{enumerate}
\end{rem}

\textbf{Acknowledgement.} We would like to thank Swagato K. Ray for suggesting this problem and for the many useful discussions during the course of this work. We also thank Rudra P Sarkar for his valuable comments regarding this work.


\begin{thebibliography}{99}

\bibitem{BTD} Ben Sa\"id, S.; Thangavelu, S.; Dogga, V. N. \textit{Uniqueness of solutions to Schr\"odinger equations on H-type groups}, J. Aust. Math. Soc. 95 (2013) no. 3, 297-314. MR3164504

\bibitem{BRS} Bhowmik, M.; Ray, S. K.; Sen, S. \textit{Around Uncertainty Principles of Ingham-type on $\R^n$, $\T^n$ and two step nilpotent Lie Groups}, preprint, arXiv:1605.09616 
 
\bibitem{BS} Bhowmik, M.; Sen, S. \textit{An Uncertainty Principle of Paley and Wiener on Euclidean Motion Group}, preprint, arXiv:1606.01704

\bibitem{Bo} Bochner, S. \textit{Quasi-analytic functions, Laplace operator, positive kernels}, Ann. of Math. (2) 51, (1950). 68-91. MR0032708 (11,334g)

\bibitem{BT} Bochner, S.; Taylor, A. E. \textit{Some Theorems on Quasi-Analyticity for Functions of Several Variables}, Amer. J. Math. 61 (1939), no. 2, 303-329. MR1507378

\bibitem{B} Burckel, Robert B. \textit{An introduction to classical complex analysis}, Pure and Applied Mathematics, 82, Academic Press, Inc [Harcourt Brace Jovanovich, Publishers], New York-London, 1979. MR0555733 (81d:30001) 

\bibitem{C} Chanillo, S. \textit{Uniqueness of solutions to Schr\"odinger equations on complex semi-simple Lie groups}, Proc. Indian Acad. Sci. Math. Sci. 117 (2007), no. 3, 325-331. MR2352052 (2008h:22010)  

\bibitem{Co} Conway, John B. \textit{Functions of one complex variable II}, Graduate Texts in Mathematics, 159, Springer-Verlag, New York, 1995. MR1344449 (96i:30001)

\bibitem{CN} Cowling, M.; Nevo, A. \textit{Uniform estimates for spherical functions on complex semisimple Lie groups}, Geom. Funct. Anal. 11 (2001), no. 5, 900-932. MR1873133 (2002k:43005) 
 
\bibitem{CSS} Cowling, M.; Sitaram, A.; Sundari, M. \textit{Hardy's uncertainty principle on semisimple groups}, Pacific J. Math. 192 (2000), no. 2, 293-296. MR1744570 (2001c:22007)

\bibitem{E} Eguchi, M. \textit{Asymptotic expansions of Eisenstein integrals and Fourier transform on symmetric spaces}, J. Funct. Anal. 34 (1979), no. 2, 167-216. MR0552702 (81e:43022)

\bibitem{EKPV1} Escauriaza, L.; Kenig, C. E.; Ponce, G.; Vega, L. \textit{On uniqueness properties of solutions of Schr\"odinger equations}, Comm. Partial Differential Equations 31 (2006), no. 10-12, 1811–1823. MR2273975 (2009a:35198)

\bibitem{EKPV} Escauriaza, L.; Kenig, C. E.; Ponce, G.; Vega, L. \textit{Uniqueness properties of solutions to Schr\"odinger equations}, Bull. Amer. Math. Soc. (N.S.) 49 (2012), no. 3, 415-442. MR2917065

\bibitem{FS} Folland, G, B.; Sitaram, A. \textit{The uncertainty principle: A mathematical survey}, J. Four. Anal. Appl. 3 (1997), no. 3, 207-238. MR1448337 (98f:42006)

\bibitem{GV} Gangolli, R.; Varadarajan V. S. \textit{Harmonic Analysis of Spherical Functions on Real Reductive Groups}, Results in Mathematics and
Related Areas, 101, Springer-Verlag, Berlin, 1988. MR954385 (89m:22015)

\bibitem{HJ} Havin, V.; J\"oricke, B. \textit{The uncertainty principle in harmonic analysis}, Ergebnisse der Mathematik und ihrer Grenzgebiete, 3, Folge, 28, Berlin, Springer-Verlag, 1994. MR1303780 (96c:42001)

\bibitem{H} Helgason, S. \textit{Differential geometry, Lie groups, and symmetric spaces}, Graduate Studies in Mathematics, 34, American Mathematical Society, Providence, RI, 2001. MR1834454 (2002b:53081)

\bibitem{H1} Helgason, S. \textit{Geometric Analysis on Symmetric Spaces}, Mathematical Surveys and Monographs 39. American Mathematical Society, Providence, RI, 1994. MR1280714 (96h:43009)

\bibitem{H2} Helgason, S. \textit{Groups and geometric analysis, Integral geometry, invariant differential operators, and spherical functions}, Mathematical Surveys and Monographs, 83. American Mathematical Society, Providence, RI, 2000. MR1790156 (2001h:22001) 

\bibitem{Hi} Hirschman, I. I. \textit{On the behaviour of Fourier transforms at infinity and on quasi-analytic classes of functions} Amer. J. Math. 72 (1950), 200-213. MR0032816 (11,350f)

\bibitem{I} Ingham, A. E. \textit{A Note on Fourier Transforms}, J. London Math. Soc. S1-9 no. 1, 29. MR1574706

\bibitem{K} Knapp, A., \textit{Representation theory of semisimple groups, An overview based on examples}, Princeton Mathematical Series, 36.
Princeton University Press, Princeton, NJ, 1986. MR855239 (87j:22022)

\bibitem{Koo} Koosis, P. \textit{The logarithmic integral I (Corrected reprint of the 1988 original)} Cambridge Studies in Advanced Mathematics, 12. Cambridge University Press, Cambridge, 1998. xviii+606 pp. MR1670244 (99j:30001)


\bibitem{L1} Levinson, N. \textit{Gap and Density Theorems} American Mathematical Society Colloquium Publications, v. 26. American Mathematical Society, New York, 1940. MR0003208 (2,180d)

\bibitem{L2} Levinson, N. \textit{On a Class of Non-Vanishing Functions} Proc. London Math. Soc. S2-41 no. 5, 393. MR1576177

\bibitem{LM} Ludwig, J.; M\"uller, D. \textit{Uniqueness of solutions to Schr\"odinger equations on 2-step nilpotent Lie groups}, Proc. Amer. Math. Soc. 142 (2014) no. 6, 2101-2118. MR3182028

\bibitem{PW1} Paley, R. E. A. C.; Wiener, N. \textit{Fourier transforms in the complex domain}, American Mathematical Society Colloquium Publications, 19. American Mathematical Society, Providence, RI, 1987. MR1451142 (98a:01023)

\bibitem{PW} Paley, R. E. A. C.; Wiener, N. \textit{Notes on the theory and application of Fourier transforms. I, II}, Trans. Amer. Math. Soc. 35 (1933), no. 2, 348-355. MR1501688

\bibitem{PS} Pasquale, A.; Sundari, M. \textit{Uncertainty principles for the Schr\"odinger equation on Riemannian symmetric spaces of the noncompact type}, Ann. Inst. Fourier (Grenoble) 62 (2012), no. 3, 859-886. MR3013810 

\bibitem{R} Rudin, W. \textit{Real and Complex Analysis}, McGraw-Hill Book Co., New York, 1987. MR924157 (88k:00002)

\bibitem{SSG} Sarkar, R. P.; Sengupta, J. \textit{Beurling's theorem and characterization of heat kernel for Riemannian symmetric spaces of noncompact type}, Canad. Math. Bull. 50 (2007), no. 2, 291-312. MR2317450 (2008e:43014)

\bibitem{SS} Stein, E. M.; Shakarchi, R. \textit{Complex analysis}, Princeton Lectures in Analysis, II. Princeton University Press, Princeton, NJ, 2003. MR1976398 (2004d:30002)

\bibitem{SW} Stein, E. M.; Weiss, G. \textit{Introduction to Fourier analysis on Euclidean spaces}, Princeton Mathematical Series, No. 32. Princeton University Press, Princeton, N.J., 1971. MR0304972 (46 \# 4102)

\bibitem{Te} Terras, A. \textit{Harmonic analysis on symmetric spaces and applications II}, Springer-Verlag, Berlin, 1988. MR0955271 (89k:22017)

\bibitem{T} Thangavelu, S. \textit{An Introduction to the Uncertainty Principle. Hardy's theorem on Lie groups}, Progress in Mathematics, 217. Birkh\"auser Boston, Inc., Boston, MA, 2004. MR2008480 (2004j:43007)

\bibitem{Th} Thangavelu, S. \textit{On Theorems of Hardy, Gelfand-Shilov and Beurling for Semisimple Lie Groups}, Publ. Res. Inst. Math. Sci. 40 (2004), no. 2, 311-344. MR2049638 (2005e:43014)  
\end{thebibliography}
\end{document}